\DeclareMathOperator*{\argmin}{\mathop{\arg \min}}
\newcommand{\inprod}[2]{\langle #1 , #2 \rangle}
\def\inprod#1#2{\langle#1, \, #2\rangle}
\def\nn{\nonumber}
\def\bdefi{\begin{definition}}
\def\edefi{\end{definition}}
\begin{document}

\title{An inexact proximal majorization-minimization Algorithm for remote sensing image stripe noise removal}
\subtitle{}

\titlerunning{An inexact PMM Algorithm for remote sensing image stripe noise removal}        

\author{Chengjing Wang \and Xile Zhao \and Qingsong Wang \and Zepei Ma \and Peipei Tang
}

\authorrunning{C.J. Wang \and X.L. Zhao \and Q.S. Wang \and Z.P. Wang \and P.P. Tang} 

\institute{Chengjing Wang \at School of Mathematics, Southwest Jiaotong University \email{renascencewang@hotmail.com} \and
  Xile Zhao \at School of Mathematical Sciences/Research Center for Image and Vision Computing, University of Electronic Science and Technology of China \email{xlzhao122003@163.com} \and
  Qingsong Wang \at Department of Mathematics, National University of Defense Technology \email{nothing2wang@hotmail.com} \and
  Zepei Ma \at School of Mathematics, Southwest Jiaotong University \email{2900675707@qq.com} \and
  Peipei Tang (corresponding author) \at
              Hangzhou City University, \email{tangpp@hzcu.edu.cn}
}

\date{Received: date / Accepted: date}

\maketitle

\begin{abstract}
The stripe noise existing in remote sensing images badly degrades the visual quality and restricts the precision of data analysis. Therefore, many destriping models have been proposed in recent years. In contrast to these existing models, in this paper, we propose a nonconvex model with a DC function (i.e., the difference of convex functions) structure to remove the strip noise. To solve this model, we make use of the DC structure and apply an inexact proximal majorization-minimization algorithm with each inner subproblem solved by the alternating direction method of multipliers. It deserves mentioning that we design an implementable stopping criterion for the inner
subproblem, while the convergence can still be guaranteed. Numerical experiments demonstrate the superiority of the proposed model and algorithm.
\keywords{Remote sensing \and group sparsity \and proximal majorization-minimization algorithm \and alternating direction method of multipliers}
\subclass{65K05 \and 90C26 \and 90C90}
\end{abstract}

\section{Introduction}\label{sec:Introduction}

Nowadays, remote sensing images are playing an important role in the environment, agriculture,  biology, mineralogy, and so on. One may see \cite{Richards22}, \cite{ZhouCGLW22}, \cite{HuangTTZHCR22} for more details. Due to the limitation of sensors, remote sensing images are inevitably corrupted by noise, which deteriorates the quality of remote sensing images and hinders their subsequent applications (e.g., classification \cite{HuangH14} and detection \cite{DuZTZ13}). Remote sensing image denoising (RSID), which removes the noise while preserving the details and edges, is one of the fundamental problems in remote sensing image processing and analysis \cite{Richards22}.

In this work, we mainly  focus on the stripe noise scenario.  The degradation process \cite{BoualiL11}, \cite{ShenZ09},  can be mathematically formulated as
\begin{eqnarray}
\begin{aligned}
f(x, y) = u(x, y) + s(x, y),
\end{aligned}\label{additional_noise}
\end{eqnarray}
where $f (x, y), u (x, y)$ and $s (x, y) $ represent the degraded image from the detector, the potential fringeless image, and the fringe component at $(x, y) $, respectively. In order to discuss the numerical algorithm, the matrix vector form of (\ref{additional_noise}) can be rewritten as follows
\begin{eqnarray}
\begin{aligned}
f = u+ s,
\end{aligned}\label{target_function1}
\end{eqnarray}
where $f, u $ and $s $ represent the vectorized discrete versions of $f (x, y), u (x, y) $ and $s (x, y) $, respectively.
\begin{itemize}
\item[(1)] Directionality: consider the gradient of fringe components in horizontal and vertical directions. Here, we use the convex function 1-norm as the sparse regularization and give
$$R_1 (s) = \| \nabla_y s \|_{1,1},$$
where $\nabla_y $ denotes a linear first order difference operator in the vertical direction.

The horizontal gradient of the desired image $u $ should be smooth. Therefore, the one-way total change \cite{BoualiL11} of the desired image is considered to be
$$R_2 (s) = \| \nabla_x u  \|_1= \| \nabla_x f - \nabla_x s  \|_{1,1}, $$
where $\nabla_x $ denotes the first order difference operator in the horizontal direction.

\item[(2)] Structural properties: moreover, different from the random noise, the fringe component shows a special columnar structure. Therefore, the regular term \cite{ChenHDZW} is designed as
$$R_3(s) = \| s \|_{2,1},$$
where $\| s\|_{2,1}= \sum\limits_{j=1}^n \big(\sum\limits_{i=1}^m s_{i,j}^2\big)^{\frac{1}{2}}, \forall s \in \mathbb{R}^{m\times n} $.
\end{itemize}

Based on the above analysis, the stripe component has significant directionality and structure. By combining $R_ 1,R_ 2 $ and $R_3$, Chen et al. \cite{ChenHDZW} proposed the fringe noise removal model as follows
\begin{eqnarray}
\begin{aligned}
\label{target_function}
\min_{s}\; & \{\lambda_1 \| \nabla_y s \|_{1,1}
+ \lambda_2 \| \nabla_x f - \nabla_x s \|_{1,1}
+ \lambda_3 \| s \|_{2,1}\},
\end{aligned}
\end{eqnarray}
where $\lambda_1,\lambda_2 $ and $\lambda_3 $ are three positive regularization parameters that balance the three terms.

In order to simplify the sign, we abbreviate
$p_1(s)=\lambda_1 \|\nabla_y s \|_{1,1}$,
$p_2(u)=\lambda_2 \|\nabla_x f - \nabla_x u \|_{1,1}$ and
$p_3(v)=\lambda_3 \| v \|_{2,1}=\lambda_3 \| h(v)\|_1$ with
$(h(v))_i:=\| v(:,i) \|$. So (\ref{target_function}) is rewritten as follows
\begin{eqnarray}
\begin{aligned}
\label{target_fun}
\min_{s}\; & \{p_1(s) + p_2(s) + p_3(s)\}.
\end{aligned}
\end{eqnarray}

For the convenience of discussion, each stripe line is regarded as a column. If the stripes are horizontal, rotate them so that the stripe lines are vertical. When the fringe component is extracted from the fringe image, the final denoising image can be estimated by the following formula
$$u = f -s.$$

Fan and Li \cite{FanL2017} proposed a nonconvex sparsity function called SCAD as a surrogate of the $\ell_{0}$ function, which has been used
as the regularizer to avoid the model overfitting in high-dimensional statistical learning. It can achieve a sparse estimation with fewer measurements, faster convergence and furthermore is more robust against noises. In this paper, we apply the SCAD function to substitute the $\ell_{1}$ function part in the model \eqref{target_fun} in order to remove the noise more well. Hence the considered model becomes a nonconvex model. In the computational aspect, we adopt an inexact proximal majorization-minimization (PMM) algorithm to solve the nonconvex model with each inner subproblem solved by the dual alternating direction method (dADMM). Although Tang et al. \cite{TangWST} proposed the PMM algorithm to solve the nonconvex square-root-loss regression problems and the inner subproblem is also solved inexactly by the semismooth Newton method, the stopping criterion for the inner subproblem is not implementable and remains on the theoretical level. Tang, Wang and Jiang \cite{TangWJ} proposed the proximal-proximal majorization-minimization (PPMM) algorithm to solve the nonconvex rank regression problems with each inner subproblem solved by the semismooth Newton method. They proposed a stopping criterion for the inner subproblem, however, the convergence analysis is relatively easier since the considered problem is polyhedral and the inner subproblem is an unconstrained one. Numerical experiments demonstrate that the modified model and the proposed algorithm is more competitive.

The remaining parts of this paper are organized as follows. In Section \ref{sec:Preliminaries}, we introduce some basic concepts and preliminary results. In Section \ref{sec:Algorithm}, we set up the model problem and present the details of the algorithm. In Section \ref{sec:Convergence analysis}, we analyze the convergence of the proposed algorithm. In Section \ref{sec:Numerical experiments}, we implement the numerical experiments to compare our model with the original convex model and also compare our algorithm with the existing algorithms. We conclude our paper in Section \ref{sec:Conclusion}.

\section{Preliminaries}
\label{sec:Preliminaries}
In this section, we introduce some preliminaries that will be used in this paper.

Let $\mathbb{X}$ and $\mathbb{Y}$ be finite dimensional Hilbert spaces.
For any convex function $f:\mathbb{X} \to (-\infty,+\infty]$,
the conjugate function of $f$ is defined as
$$f^* (y):= \sup_{x\in \textrm{dom}(f)} \;\Big\{\langle y,x\rangle - f(x)\Big\}.$$

For a closed proper convex function
$f:\mathbb{X} \to (-\infty,+\infty]$
and a parameter $\sigma > 0$,
the proximal mapping $\textrm{Prox}_{\sigma f}$ is defined as
$$\textrm{Prox}_{\sigma f}(y):= \mathop{\arg \min}_x
\Big\{f(x) + \frac{1}{2\sigma} \|y-x\|^2\Big\},
\forall\, y \in \mathbb{X}.$$
The proximal mapping $\mbox{Prox}_{\sigma f}$ is single-valued and continuous with the following Moreau identity (see e.g.,\cite[Theorem 31.5]{Rockafellar70}) holds
\begin{eqnarray*}
\textrm{Prox}_{\sigma f}(x) + \sigma \textrm{Prox}_{\frac{1}{\sigma}f^{*}}(\frac{1}{\sigma}x)=x.
\end{eqnarray*}

A multifunction $\mathcal{F}:\mathbb{X}\rightrightarrows\mathbb{Y}$ is locally upper Lipschitz continuous at $x\in\mathbb{X}$ if there exist a parameter $\kappa$ which is independent of $x$ and a neighbourhood $\mathcal{U}$ of $x$ such that $\mathcal{F}(y)\subseteq\mathcal{F}(x)+\kappa\|y-x\|\mathbb{B}_{\mathbb{X}}$ holds for any $y\in\mathcal{U}$, where $\mathbb{B}_{\mathbb{X}}$ is a unit ball of the space $\mathbb{X}$. The multifunction $\mathcal{F}$ is said to be piecewise polyhedral if its graph $\mbox{gph}\mathcal{F}:=\{(x,y)\, |\, y\in\mathcal{F}(x)\}$ is the union of finitely many polyhedral convex sets.

The Kurdyka-{\L}ojasiewicz (K{\L}) property (see e.g., \cite{bolte-pauwels2016}) plays a central role in the convergence analysis. For further discussion, we introduce this concept below.
\begin{definition}
	A proper lower semicontinuous function $r:\mathbb{X}\rightarrow(-\infty,+\infty]$ is said to have the K{\L} property at $x\in\mbox{dom}(\partial r)$ if there exist $\eta\in(0,+\infty]$, a neighbour $\mathcal{U}$ of $x$ and a continuous concave function $\varphi:[0,\eta)\rightarrow [0,+\infty)$ satisfying\\
	(1) $\varphi(0)=0$;\\
	(2) $\varphi$ is continuous at 0 and continuously differentiable on $(0,\eta)$;\\
	(3) $\varphi'(s)>0$, for all $0<s<\eta$\\
	such that the K{\L} inequality $\varphi'(r(x')-r(x))\mbox{dist}(0,\partial r(x'))\geq 1$ holds for any $x'\in \mathcal{U}$ and $r(x)<r(x')<r(x)+\eta$.
	If $r$ satisfies the K{\L} property at each point of $\mbox{dom}(\partial r)$, then $r$ is called a K{\L} function.
\end{definition}

\section{Problem setup and the algorithm}
\label{sec:Algorithm}

In this paper, we consider the following nonconvex model problem by employing the SCAD function instead of (\ref{target_fun})
\begin{eqnarray}\label{eq:g-function}
\min_s \;\Big\{
g(s):=p_1(s) - q_1(s) + p_2(s)-q_2(s)+p_3(s)-q_3(s)
\Big\},
\end{eqnarray}
where $q_1(s)=q_{\alpha,\lambda_1} (\nabla_y s),
q_2(u)=q_{\alpha,\lambda_2} (\nabla_x u)$ and
$q_3(v)=q_{\alpha,\lambda_3} (h(v))$, here
\begin{eqnarray*}
q_{\alpha,\lambda}(x)=\sum_{i=1}^{n}q^{\rm scad}(x_{i};\alpha,\lambda),\
q^{\rm scad}(t;\alpha,\lambda)=\left\{\begin{array}{ll}0,&\mbox{if}\quad
|t|\leq \lambda,\\
\frac{(|t|-\lambda)^2}{2(\alpha-1)},&\mbox{if}\quad \lambda\leq
|t|\leq \alpha\lambda,\\
\lambda |t|-\frac{\alpha+1}{2}\lambda^2,&\mbox{if}\quad
|t|>\alpha\lambda.
\end{array}\right.
\end{eqnarray*}
In the above formula, we take $\alpha=3.7$ in both the theoretical analysis and the numerical implementation. Note that the function $q_{\alpha,\lambda}(x)$ is continuously differentiable with
\begin{eqnarray*}
\frac{\partial q_{\alpha,\lambda}(x)}{\partial x_{i}}=\left\{\begin{array}{ll}0,&\mbox{if}\ |x_{i}|\leq\lambda,\\
\frac{\mbox{sign}(x_{i})(|x_{i}|-\lambda)}{\alpha-1},&\mbox{if}\ \lambda<|x_{i}|\leq \alpha\lambda,\\
\lambda\mbox{sign}(x_{i}),&\mbox{if}\ |x_{i}|>\alpha\lambda.\end{array}\right.
\end{eqnarray*}

\subsection{The inexact PMM for the problem \eqref{eq:g-function}}
\label{subsec:PMM}

In this subsection, we present the framework of the inexact PMM. For simplicity, we write the problem \eqref{eq:g-function} as
\begin{eqnarray}\label{eq:G-function}
\min_s \;
\Big\{
P(s)-Q(s)
\Big\},
\end{eqnarray}
where $P(s)=p_1(s)+ p_2(s) + p_3(s)$, $Q(s)=q_1(s)+ q_2(s) + q_3(s)$.

Although problem \eqref{eq:G-function} is nonconvex, it is in a DC form. Thus, we naturally linearize the latter convex term $Q(s)$ to transform a nonconvex problem to a convex one, add a proximal term to guarantee the strong convexity, then adopt the sequential convexification approach to solve the problem. Now we present the algorithm as follows.

\begin{algorithm}
	\caption{The inexact PMM for the problem \eqref{eq:G-function}:}\label{alg:PMM}
	Choose $s^{0}$. For $k=0,1,2,\ldots$, iterate:
	\begin{description}
		\item[1.] Compute $s^{k+1}$
\begin{eqnarray}
\begin{aligned}
\label{subproblem}
s^{k+1}&\approx\mathop{\arg \min}_s \;
\left\{P(s) - Q(s^k) - \langle \nabla Q(s^k), s-s^k \rangle + \frac{1}{2 \widetilde{\sigma}_k} \|s - s^k\|^2
\right\}.
\end{aligned}
\end{eqnarray}
		\item[2.] If a stopping criterion is not satisfied, go to \textbf{Step 1}.
	\end{description}
\end{algorithm}

\subsection{The dADMM for the subproblem \eqref{subproblem}}

In this subsection, we focus on how to solve the subproblem \eqref{subproblem}. In the problem \eqref{subproblem}, for $q_i(x)\,(i=1,2,3)$, at the $k$th iteration we have $q_i(x) \geq q_i(x^k) + \langle g_i^k , x-x^k \rangle $, where $g_i^k := \nabla q_i (x^k)$. We denote $\widehat p_1(s) := p_1(s) + \frac{1}{2 \widetilde \sigma_{k}} \|s - s^{k}\|^2 $ and $\widehat p_3(v) :=\lambda_3 \|h(v) \| - \langle g_h^k, h(v)\rangle $, where $g_h^k := \nabla q_3(h(v^k))$.
By introducing two variables $u$ and $v$, the problem (\ref{subproblem}) can be equivalently rewritten as
\begin{eqnarray}
\begin{aligned}
\label{con:subproblem}
\min_{s,u,v} \,\;
&\widehat p_1(s) - \langle g_1^k,s \rangle + p_2(u) - \langle g_2^k,u \rangle + \widehat p_3(v) \\
\mbox{s.t.} \quad & f-s-u=0, \\
& s-v=0.
\end{aligned}
\end{eqnarray}
The Lagrangian function associated with the problem (\ref{con:subproblem}) is given by
\begin{eqnarray*}
l(s,u,v;x,y)
&=& \widehat p_1(s) - \langle g_1^k,s \rangle + p_2(u) - \langle g_2^k,u \rangle + \widehat p_3(v) + \langle x,u+s-f \rangle + \langle y,v-s \rangle.
\end{eqnarray*}
Then the Karush-Kuhn-Tucker (KKT) condition for the problem \eqref{con:subproblem} is
\begin{eqnarray}
&f-s-u=0,\ s-v=0,\ s-\textrm{Prox}_{\widehat{p}_{1}}(s+g^{k}_{1}-x+y)=0,&\nonumber\\
&u-\textrm{Prox}_{p_{2}}(u+g^{k}_{2}-x)=0,\ v-\textrm{Prox}_{\widehat{p}_{3}}(v-y)=0.&\label{eq:KKT-subproblem}
\end{eqnarray}
And the dual problem is
\begin{eqnarray}
\begin{aligned}
\label{con:equal_subproblem_dual}
\min_{x,y,z,\hat x,\hat y } \; &\widehat p_1^*(z) + p_2^*(\widehat x) + \widehat p_3^*(\widehat y) + \langle x,f\rangle  \\
\mbox{s.t.}\quad &-x+y+g_1^k=z, \\
&-x + g_2^k=\widehat x, \\
&-y=\widehat y.
\end{aligned}
\end{eqnarray}
Next, the augmented Lagrangian function associated with the problem (\ref{con:equal_subproblem_dual}) is
\begin{eqnarray*}
L_{\sigma}(x,y,z,\widehat x,\widehat y;s,u,v)
& = & \widehat p_1^*(z) + p_2^*(\widehat x) + \widehat p_3^*(\widehat y) + \langle x,f\rangle + \frac{\sigma}{2}\|-x+y+g_1^k-z+\frac{1}{\sigma}s\|^2 \nonumber \\
&& + \frac{\sigma}{2}\|-x+g_2^k-\widehat x +\frac{1}{\sigma}u\|^2 + \frac{\sigma}{2}\|-y-\widehat y+\frac{1}{\sigma}v\|^2\nonumber\\
&&-\frac{1}{2\sigma}(\|s\|^{2}+\|u\|^{2}+\|v\|^{2}).
\end{eqnarray*}
Then we describe the details of how to solve the subproblems.

\begin{itemize}
\item[(1)] \textbf{$(x, y)$-subproblem:} Let $(\bar x,\bar y)$ denote the optimal solution of the $(x,y)$-subproblem, i.e.,
\begin{eqnarray*}
(\bar{x},\bar{y}) &= \argmin_{x,y} \Big\{ L_{\sigma}(x,y,z,\widehat x,\widehat y;s,u,v)\Big\}.
\end{eqnarray*}
It is equivalent to solving the following linear system of equations
\begin{eqnarray*}
\begin{bmatrix}
2I &-I \\
-I &2I
\end{bmatrix}
\begin{bmatrix}
x\\
y
\end{bmatrix}& =
\frac{1}{\sigma}
\begin{bmatrix}
s + u -f -\sigma (z + \widehat x - g^{k}_{1} - g^{k}_{2})\\
v - s + \sigma (z - \widehat y - g^{k}_{1})
\end{bmatrix}.
\end{eqnarray*}

\item[(2)] \textbf{$(z,\widehat x, \widehat y)$-subproblem:} Let $(\bar{z},\bar{\widehat{x}},\bar{\widehat{y}})$ be the optimal solution of the
$(z,\widehat x, \widehat y)$-subproblem. We have
\begin{eqnarray*}
\bar z &=& \argmin_{z}\;\Big\{ \widehat p_1^*(z) + \frac{\sigma}{2}\| -x+y+g_1^k-z{\color{blue}+}\frac{1}{\sigma}s \|^2\Big\} \\
&=&{\rm Prox}_{\frac{1}{\sigma}\widehat p_1^*} (\frac{s}{\sigma}-x +y +g_1^k), \\
\bar{\widehat{x}} &=& \argmin_{\widehat x} \;\Big\{ p_2^*(\widehat x) + \frac{\sigma}{2}\| -x+g_2^k-\widehat x+\frac{1}{\sigma}u \|^2 \Big\}\\
&=&{\rm Prox}_{\frac{1}{\sigma}p_2^*} (\frac{u}{\sigma}-x+g_2^k), \\
\bar{\widehat{y}} &=& \argmin_{\widehat y} \;\Big\{ \widehat p_3^*(\widehat y)  + \frac{\sigma}{2}\| -y-\widehat{y}+\frac{1}{\sigma}v \|^2 \Big\}\\
&=&{\rm Prox}_{\frac{1}{\sigma}\widehat p_3^*} (\frac{v}{\sigma}-y).
\end{eqnarray*}
\end{itemize}
Based on the Moreau identity, we have
\begin{eqnarray*}
{\rm Prox}_{\frac{1}{\sigma}\widehat p_1^*} (\frac{s}{\sigma}-x +y +g_1^k)&=& \frac{1}{\sigma}(s - \sigma x + \sigma y + \sigma g_1^k- {\rm Prox}_{\sigma \widehat p_1}(s - \sigma x + \sigma y + \sigma g_1^k)),\\
{\rm Prox}_{\frac{1}{\sigma}p_2^*} (\frac{u}{\sigma}-x+g_2^k)&=& \frac{1}{\sigma}(u - \sigma x + \sigma g_2^k-{\rm Prox}_{\sigma p_2}(u - \sigma x + \sigma g_2^k)), \\
{\rm Prox}_{\frac{1}{\sigma}\widehat p_3^*} (\frac{v}{\sigma}-y)&=& \frac{1}{\sigma}(v - \sigma y -{\rm Prox}_{\sigma \widehat p_3}(v- \sigma y)).
\end{eqnarray*}
Note that
\begin{eqnarray}
{\rm Prox}_{\sigma \widehat p_1}(s) &=& \argmin_{x} \Big\{ \| \nabla_y x \|_1
+ \frac{1}{2 \widetilde \sigma_{k}} \| x - s^{k} \|^2 + \frac{1}{2 \sigma} \| x - s \|^2\Big\}\nonumber \\
&=& \argmin_{x} \Big\{ \| \nabla_y x \|_1 + \frac{1}{2 \widehat \sigma} \| x - \widehat s \|^2\Big\}\nonumber \\
&=& {\rm Prox}_{\widehat \sigma p_1}(\widehat s),\label{eq:prox-p1}
\end{eqnarray}
where $\widehat \sigma = \sigma \widetilde\sigma_{k} /(\sigma + \widetilde\sigma_{k}) < \sigma$, $\widehat s = \widehat \sigma (s / \sigma + s^{k} / \widetilde\sigma_{k})$.

As we know, for $s_{v} \in {\mathbb R}^n,\,\lambda > 0$, let $\| \nabla \cdot\|_1$ denote the first order difference operator of a vector, then the proximal mapping of $\| \nabla \cdot\|_1$
\begin{eqnarray}
{\rm Prox_{\lambda \,\| \nabla\cdot\|_1}}(s_{v})
= \arg \min_{x\in\mathbb{R}^{n}}\Big\{\| \nabla x_{v}\|_1 + \frac{1}{2\lambda} \| x_{v}-s_{v}\|^2\Big\}\label{eq:prox-nable}
\end{eqnarray}
can be obtained by the fast algorithm proposed by Condat \cite{Condat}.
 Hence, ${\rm Prox}_{\widehat \sigma p_1}(\widehat s)$ in \eqref{eq:prox-p1} can be computed column by column via \eqref{eq:prox-nable}, i.e.,
\begin{eqnarray}
{\rm Prox}_{\sigma \widehat p_3}(s)&=&\argmin_v \Big\{\widehat p_3 (v)+\frac{1}{2 \sigma} \| v-s \|^2\Big\}  \nonumber \\
&=&\argmin_v \Big\{\lambda_1 \| h(v) \|_1 - \langle g^{k}_h, h(v)\rangle +
\frac{1}{2 \sigma} \| v-s \|^2\Big\}, \label{argmin_v}
\end{eqnarray}
where $h(v)$ is a vector defined in Section \ref{sec:Introduction}.
Due to the separability of $\| \cdot \|_1$ and
$\| \cdot \|^2$, we can solve the problem (\ref{argmin_v}) in the following separate form
\begin{align}
v(:,i)=\arg \min_{v(:,i)} \Big\{ (\lambda_1-(g_h^k)_{i})\| v(:,i) \| + \frac{1}{2\sigma} \| v(:,i)-s(:,i) \|^2\Big\} ,\;i=1,\cdots,n,
\label{eq-v-problem-elementwise}
\end{align}
where $(g_h^k)_{i}$ is the $i$th element of the vector $g_h^{k}$. Since we take $\alpha=3.7$ in the SCAD function, we have $\lambda_1-(g_h^k)_{i}>0$, which guarantees that the problem \eqref{eq-v-problem-elementwise} is convex.

Now, we describe the dADMM for the problem
(\ref{con:equal_subproblem_dual}) as follows.

\begin{algorithm}
	\caption{The dADMM for the problem  (\ref{con:equal_subproblem_dual}):}\label{alg:dADMM}
	Let $\tau \in (0,(1+\sqrt{5})/2)$ be a scalar parameter, $\sigma >0$ be a given parameter, and choose $\{x^0,y^0,z^0,\widehat x^0,\widehat y^0,s^0,u^0,v^0\}$. For $j=0,1,2,\ldots$, iterate:
	\begin{description}
		\item[1.] Solve the following linear system to compute $(x^{j+1}, y^{j+1})$:
 \begin{eqnarray*}
 \begin{bmatrix}
 2I &-I \\
 -I &2I
 \end{bmatrix}
 \begin{bmatrix}
 x^{j+1}\\
 y^{j+1}
 \end{bmatrix}&=&
 \frac{1}{\sigma}
 \begin{bmatrix}
 s^j + u^j -f -\sigma (z^j + \widehat x^j - g_1^k - g_2^k)\\
 v^j - s^j + \sigma (z^j - \widehat y^j - g_1^k)
 \end{bmatrix}.
 \end{eqnarray*}
		\item[2.] Compute $(\widehat x^{j+1}, \widehat y^{j+1},z^{j+1})$:
 \begin{eqnarray*}
 \begin{bmatrix}
 \widehat x^{j+1}\\
 \widehat y^{j+1}\\
 z^{j+1}
 \end{bmatrix}&=&
 \begin{bmatrix}
 {\rm Prox}_{\frac{1}{\sigma} p_2^*} (\frac{1}{\sigma} u^j - x^{j+1} + g_2^k) \\
 {\rm Prox}_{\frac{1}{\sigma} \widehat p_3^*} (\frac{1}{\sigma} v^j - y^{j+1} ) \\
 {\rm Prox}_{\frac{1}{\sigma} \widehat p_1^*} (\frac{1}{\sigma} s^j - x^{j+1} +y^{j+1} + g_1^k)
 \end{bmatrix}.
 \end{eqnarray*}
 \item[3.] Update $(s^{j+1}, u^{j+1}, v^{j+1})$:
\begin{eqnarray*}
 s^{j+1}&=& s^j + \tau \sigma (-x^{j+1} + y^{j+1} + g_1^k - z^{j+1}), \\
 u^{j+1}&=& u^j + \tau \sigma (-x^{j+1} + g_2^k - \widehat x^{j+1}), \\
 v^{k+1}&=& v^j + \tau \sigma (-y^{j+1} - \widehat y^{j+1}).
\end{eqnarray*}
 \item[4.] If a stopping criterion is not satisfied, go to \textbf{Step 1}.
	\end{description}
\end{algorithm}

\section{Convergence Analysis}
\label{sec:Convergence analysis}
In this section, we analyze the convergence of the algorithm.
\subsection{Convergence analysis for the inexact dADMM}
\label{subsec:Convergence analysis for ADMM}
Since the inexact dADMM for the inner subproblem can be regarded as a special case of the inexact symmetric Gauss-Seidel method \cite{ChenST}, the convergence analysis can also be borrowed to here.
\begin{theorem}
Suppose that the solution set to the KKT system \eqref{eq:KKT-subproblem} is nonempty and the sequence $\{(s^j,u^j,v^j,x^j,y^j,z^j,\widehat x^j,\widehat y^j)\}$ is generated by the dADMM, then $\{(s^j,u^j,v^j)\}$ converges to the solution of the primal problem \eqref{con:subproblem} and $\{(x^j,y^j,z^j,\widehat x^j,\widehat y^j)\}$ converges to the solution of the dual problem \eqref{con:equal_subproblem_dual}.
\end{theorem}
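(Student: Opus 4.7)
My plan is to reduce the statement to a classical convergence result for the two-block convex ADMM by recasting (\ref{con:equal_subproblem_dual}) in an appropriate form, and then to translate dual convergence back to primal convergence using the multiplier updates in Step~3 of Algorithm~\ref{alg:dADMM}.

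First I would put the dual problem (\ref{con:equal_subproblem_dual}) into the canonical two-block form
\[
\min_{w_1,w_2}\ \theta_1(w_1)+\theta_2(w_2)\quad \text{s.t.}\quad \mathcal{A}w_1+\mathcal{B}w_2=c,
\]
by grouping $w_1:=(x,y)$ with $\theta_1(w_1):=\langle x,f\rangle$ and $w_2:=(z,\widehat{x},\widehat{y})$ with $\theta_2(w_2):=\widehat{p}_1^*(z)+p_2^*(\widehat{x})+\widehat{p}_3^*(\widehat{y})$, and collecting the three equality constraints into the linear operators $\mathcal{A},\mathcal{B}$ and constant $c=(g_1^k,g_2^k,0)$. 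Since $\widehat{p}_1,p_2,\widehat{p}_3$ are closed proper convex (in particular, $\widehat{p}_1$ inherits strong convexity from the proximal term at $s^k$), their conjugates are also closed proper convex, so $\theta_1,\theta_2$ are closed proper convex. The Step~1 system in Algorithm~\ref{alg:dADMM} is exactly the unique minimizer of the augmented Lagrangian over $w_1$ (the coefficient matrix $\bigl[\begin{smallmatrix}2I&-I\\-I&2I\end{smallmatrix}\bigr]$ is positive definite, so $\mathcal{A}^*\mathcal{A}\succ 0$), and Step~2 gives the block-separable minimizer over $w_2$ in closed form via the Moreau identity, so the three components decouple with $\mathcal{B}^*\mathcal{B}\succ 0$ as well. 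In particular, this is a genuine two-block ADMM (the symmetric Gauss-Seidel device of \cite{ChenST} collapses to a plain block update here because $\theta_2$ is separable across $(z,\widehat{x},\widehat{y})$).

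Having identified the scheme as the standard two-block convex ADMM with Glowinski-type relaxation $\tau\in(0,(1+\sqrt{5})/2)$ and nonempty KKT set by hypothesis, I would invoke the classical convergence theorem (Fortin-Glowinski, Eckstein-Bertsekas, or the specialization of \cite{ChenST}) to conclude that the dual iterates $\{(x^j,y^j,z^j,\widehat{x}^j,\widehat{y}^j)\}$ converge to a KKT point of (\ref{con:equal_subproblem_dual}) and the multiplier sequence $\{(s^j,u^j,v^j)\}$ converges. Then, from the Step~3 updates $s^{j+1}-s^j=\tau\sigma(-x^{j+1}+y^{j+1}+g_1^k-z^{j+1})$, $u^{j+1}-u^j=\tau\sigma(-x^{j+1}+g_2^k-\widehat{x}^{j+1})$, $v^{j+1}-v^j=\tau\sigma(-y^{j+1}-\widehat{y}^{j+1})$, passing to the limit shows that the limit of $\{(s^j,u^j,v^j)\}$ satisfies the primal feasibility equations in (\ref{con:subproblem}); combined with the dual optimality conditions implied by the $(z,\widehat{x},\widehat{y})$-update and the Moreau identity, the limit satisfies the full KKT system (\ref{eq:KKT-subproblem}), hence is optimal for the primal.

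The main obstacle I anticipate is the verification that the two-block ADMM framework genuinely applies in the exact form needed, since in the paper's convention the minimization is over the dual variables while $(s,u,v)$ play the role of multipliers. In particular I would need to check carefully that (i) $\mathcal{A}^*\mathcal{A}$ and $\mathcal{B}^*\mathcal{B}$ are positive definite so that the blockwise minimizers are uniquely defined and give contractive-type estimates, (ii) strong convexity of $\widehat{p}_1$ (coming from the $\tfrac{1}{2\widetilde{\sigma}_k}\|s-s^k\|^2$ term) is correctly exploited when passing to the conjugate side, and (iii) the relaxation parameter range is preserved under the grouping. Once these technical points are in place, the standard convergence conclusion applies and yields both primal and dual convergence as claimed.
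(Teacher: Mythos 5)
Your proposal is correct and follows essentially the same route as the paper, which in fact gives no written proof at all: it simply observes that the dADMM is a special case of the (symmetric Gauss--Seidel) ADMM framework of \cite{ChenST} and borrows the convergence analysis, exactly the reduction to a standard two-block convex ADMM with $\tau\in(0,(1+\sqrt{5})/2)$ that you carry out. Your version is actually more detailed than the paper's, supplying the verification of $\mathcal{A}^*\mathcal{A}\succ 0$, $\mathcal{B}^*\mathcal{B}\succ 0$ and the passage from dual/multiplier convergence back to the primal KKT system \eqref{eq:KKT-subproblem}.
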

\subsection{Convergence analysis for the PMM algorithm}
\label{subsec:Convergence analysis for PMM}
In this subsection, we focus on the convergence analysis of the PMM algorithm. Since the inner subproblem is solved inexactly, the error must be considered if analyzing the convergence of the PMM algorithm. When solving the subproblem \eqref{con:equal_subproblem_dual}, we actually solve the following problem
\begin{eqnarray}
\begin{aligned}
\label{con:inexact_subproblem_dual}
\min_{x,y,z,\widehat x,\widehat y } &\Big\{\widehat p_1^*(z+r_{z}^{k+1}) + p_2^*(\widehat x+r_{\widehat{x}}^{k+1}) + \widehat p_3^*(\widehat y+r_{\widehat{y}}^{k+1}) + \langle x,f\rangle +\\
 &\langle x,r_{x}^{k+1}\rangle + \langle y,r_{y}^{k+1}\rangle + \langle z,r_{z}^{k+1}\rangle + \langle\widehat{x},r_{\widehat{x}}^{k+1}\rangle + \langle \widehat{y},r_{\widehat{y}}^{k+1}\rangle \Big\} \\
\mbox{s.t.}\quad &-x+y+g_1^k-z-r_{1}^{k+1}=0, \\
&-x + g_2^k-\widehat x-r_{2}^{k+1}=0, \\
&-y-\widehat y-r_{3}^{k+1}=0,
\end{aligned}
\end{eqnarray}
where $r_{1}^{k+1},r_{2}^{k+1},r_{3}^{k+1},r_{x}^{k+1},r_{y}^{k+1},r_{z}^{k+1},r_{\widehat{x}}^{k+1},r_{\widehat{y}}^{k+1}\in \mathbb{R}^{m\times n}$ are error matrices. Therefore, the original primal problem we actually solve is
\begin{eqnarray}
\begin{aligned}
\label{con:inexact_subproblem_primal}
\min_{s} &\Big\{\widehat{p}_1(s-r_{z}^{k+1}) + p_2(f-s+r_{x}-r_{\widehat{x}}^{k+1}) + \widehat p_3(s+r_{y}^{k+1}-r_{\widehat{y}}^{k+1}) - \\
 &\langle s,g^{k}_{1}-r_{1}^{k+1}\rangle -\langle f-s+r_{x}^{k+1},g^{k}_{2}-r_{2}^{k+1}\rangle - \langle s+r_{y}^{k+1},-r_{3}^{k+1}\rangle-\\
 &\inprod{r_{z}^{k+1}}{s-r_{z}^{k+1}}-\inprod{r_{\widehat{x}}^{k+1}}{f-s+r_{x}^{k+1}-r_{\widehat{x}}^{k+1}} - \inprod{r_{\widehat{y}}^{k+1}}{s+r_{y}^{k+1}-r_{\widehat{y}}^{k+1}} \Big\}. \\
\end{aligned}
\end{eqnarray}
Let $r^{k+1}:=(r_{1}^{k+1},r_{2}^{k+1},r_{3}^{k+1},r_{x}^{k+1},r_{y}^{k+1},r_{z}^{k+1},r_{\widehat{x}}^{k+1},r_{\widehat{y}}^{k+1})\in \mathbb{R}^{m\times n}$. Note that $r^{k+1}$ converges to zero, it follows that if the inner subproblem is adequately iterated, then $r^{k+1}$ satisfies
\begin{eqnarray}
2p_{1}(r_{z}^{k+1})+2p_{2}(r_{x}^{k+1}-r_{\widehat{x}}^{k+1})+2p_{3}(r_{y}^{k+1}-r_{\widehat{y}}^{k+1})\nn\\
+\frac{1}{2\widetilde{\sigma}_{k}}\|r_{z}^{k+1}\|^{2} +\textrm{pert}(r^{k+1})\leq\frac{1}{4\widetilde{\sigma}_{k}}\|s^{k+1}-s^{k}\|^{2},\label{ineq:stopping criterion}
\end{eqnarray}
where
\begin{eqnarray*}
\textrm{pert}(r^{k+1}):=&|\inprod{s}{r_{1}^{k+1}}+\inprod{f-s+r_{x}^{k+1}}{r_{2}^{k+1}}+\inprod{s+r_{y}^{k+1}}{r_{3}^{k+1}}+\inprod{r_{z}^{k+1}}{s-r_{z}^{k+1}}\\
&+\inprod{r_{\widehat{x}}^{k+1}}{f-s+r_{x}^{k+1}-r_{\widehat{x}}^{k+1}}+\inprod{r_{\widehat{y}}^{k+1}}{s+r_{y}^{k+1}-r_{\widehat{y}}^{k+1}}|.
\end{eqnarray*}

In the following, for the convenience of statement, we denote
\begin{eqnarray*}
f_{k}(s) &:=& \widehat p_1(s-r_{z}^{k+1}) + p_2(f-s+r_{x}^{k+1}-r_{\widehat{x}}^{k+1}) + \widehat p_3(s+r_{y}^{k+1}-r_{\widehat{y}}^{k+1})-q_{1}(\nabla_{y}s^{k})\\
&& -\inprod{g_{1}^{k}}{s-s^{k}}-q_{2}(\nabla_{x}f-\nabla_{x}s^{k})-\inprod{g_{2}^{k}}{(f-s)-(f-s^{k})}+f^P_{k}(s).
\end{eqnarray*}
where $f^P_{k}(s):=\inprod{s}{r_{1}^{k+1}}+\inprod{f-s+r_{x}^{k+1}}{r_{2}^{k+1}}+\inprod{s+r_{y}^{k+1}}{r_{3}^{k+1}}+\inprod{r_{z}^{k+1}}{s-r_{z}^{k+1}}+\inprod{r_{\widehat{x}}^{k+1}}{f-s+r_{x}^{k+1}-r_{\widehat{x}}^{k+1}}+\inprod{r_{\widehat{y}}^{k+1}}{s+r_{y}^{k+1}-r_{\widehat{y}}^{k+1}}$.
Then at the $k$th iteration of the PMM algorithm, the actual problem we solve is
\begin{eqnarray*}
s^{k+1} = \argmin_{s}\Big\{f_{k}(s)\Big\}.
\end{eqnarray*}

For the proof of the convergence theory, we must first prove the sufficiently decent property of the sequence $\{g(s^{k})\}$.
\begin{lemma}\label{lemma-g-descent}
Let $\{s^{k}\}$ be a sequence generated by the PMM algorithm. We have the following descent property
\begin{eqnarray*}
g(s^{k})\geq g(s^{k+1})+\frac{1}{4\widetilde{\sigma}}_{k}\|s^{k+1}-s^{k}\|^{2}.
\end{eqnarray*}
\end{lemma}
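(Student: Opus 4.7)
The plan is to exploit the reformulation in which $s^{k+1}$ is the exact minimizer of the perturbed, strongly convex objective $f_k$, and to compare $f_k$ at $s^k$ and $s^{k+1}$ with $g$ at the same two points, using the convexity of $q_1,q_2,q_3$ and the sublinearity of the $\ell_1$- and column-$\ell_2$-type norms to control the perturbation errors.

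The first step is to note that $f_k$ is strongly convex in $s$ with modulus $1/\widetilde{\sigma}_k$. All pieces other than the quadratic $\tfrac{1}{2\widetilde{\sigma}_k}\|s-r_z^{k+1}-s^k\|^2$ hidden inside $\widehat{p}_1(s-r_z^{k+1})$ are convex in $s$; in particular, $\widehat{p}_3(s+r_y^{k+1}-r_{\widehat{y}}^{k+1})=\lambda_3\|h(\cdot)\|-\langle g_h^k,h(\cdot)\rangle$ is convex because $\lambda_3-(g_h^k)_i>0$ from the $\alpha=3.7$ choice. Since $s^{k+1}=\argmin_s f_k(s)$ exactly, strong convexity yields
\[
f_k(s^k)-f_k(s^{k+1})\;\geq\;\tfrac{1}{2\widetilde{\sigma}_k}\|s^{k+1}-s^k\|^2.
\]

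The second step is to establish two comparison inequalities, $f_k(s^k)\leq g(s^k)+E_1$ and $g(s^{k+1})\leq f_k(s^{k+1})+E_2$. For the first, I substitute $s=s^k$ into $f_k$, observe that the linearizations $-q_i(\cdot)-\langle g_i^k,0\rangle$ reduce to $-q_i(s^k)$ and hence cancel exactly with the corresponding pieces of $g(s^k)$, use sublinearity of $p_1$, $p_2$, and $\lambda_3\|h(\cdot)\|_1$ to push the perturbations $r_z^{k+1}$, $r_x^{k+1}-r_{\widehat{x}}^{k+1}$, $r_y^{k+1}-r_{\widehat{y}}^{k+1}$ out of the norms, and collect the residual $\tfrac{1}{2\widetilde{\sigma}_k}\|r_z^{k+1}\|^2$ from the shifted proximal square together with the linear parts of $f^P_k(s^k)$ and the cross term $\langle g_h^k,h(s^k+\delta)-h(s^k)\rangle$ into $\textrm{pert}(r^{k+1})$ via Cauchy--Schwarz. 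For the second, I use convexity $q_i(s^{k+1})\geq q_i(s^k)+\langle g_i^k,s^{k+1}-s^k\rangle$ ($i=1,2$) and the analogous inequality for $q_3$ through $q_{\alpha,\lambda_3}$ applied at $h(\cdot)$ to show that the linearized $-q_i$ terms at $s=s^{k+1}$ dominate $-q_i(s^{k+1})$; sublinearity of the norms (in reverse) and Cauchy--Schwarz again absorb the remaining perturbation terms. Combining these with the strong-convexity bound gives
\[
g(s^k)\;\geq\;g(s^{k+1})+\tfrac{1}{2\widetilde{\sigma}_k}\|s^{k+1}-s^k\|^2-(E_1+E_2),
\]
and the stopping criterion (\ref{ineq:stopping criterion}) was designed precisely so that $E_1+E_2\leq \tfrac{1}{4\widetilde{\sigma}_k}\|s^{k+1}-s^k\|^2$, reducing the modulus to $\tfrac{1}{4\widetilde{\sigma}_k}$ as claimed.

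The main obstacle will be the error bookkeeping, specifically matching each cross term generated by the two comparisons to the exact ingredients appearing on the left-hand side of (\ref{ineq:stopping criterion}). The factor of $2$ in front of $p_1(r_z^{k+1})$, $p_2(r_x^{k+1}-r_{\widehat{x}}^{k+1})$, and $p_3(r_y^{k+1}-r_{\widehat{y}}^{k+1})$ mirrors the two sublinearity applications, one inside $E_1$ and one inside $E_2$; the term $\tfrac{1}{2\widetilde{\sigma}_k}\|r_z^{k+1}\|^2$ appears only once, arising solely from the comparison at $s^k$ (at $s^{k+1}$ the analogous squared norm $\tfrac{1}{2\widetilde{\sigma}_k}\|s^{k+1}-r_z^{k+1}-s^k\|^2$ is nonnegative and can simply be dropped); and $\textrm{pert}(r^{k+1})$ absorbs every inner product coming from $f^P_k(s^k)$, $f^P_k(s^{k+1})$, and the $\widehat{p}_3$ composition. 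The most delicate piece is precisely this last one: sublinearity controls $\lambda_3\|h(\cdot)\|$, but the contribution $-\langle g_h^k,h(s^{k+1}+\delta)-h(s^{k+1})\rangle$ is nonlinear in $\delta$ and must be bounded by Cauchy--Schwarz before being folded into $\textrm{pert}(r^{k+1})$.
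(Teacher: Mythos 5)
Your proposal is correct and follows essentially the same route as the paper: sandwich $g(s^{k})$ and $g(s^{k+1})$ between $f_{k}(s^{k})$ and $f_{k}(s^{k+1})$ using the sublinearity $|p_{i}(s_{1})-p_{i}(s_{2})|\leq p_{i}(s_{1}-s_{2})$ and the convexity of the $q_{i}$, then absorb all error terms via the stopping criterion \eqref{ineq:stopping criterion}. The only (cosmetic) difference is that you extract the quadratic gain $\tfrac{1}{2\widetilde{\sigma}_{k}}\|s^{k+1}-s^{k}\|^{2}$ from the strong convexity of $f_{k}$ at its exact minimizer, whereas the paper reads it off the proximal term retained in $f_{k}(s^{k+1})$; both yield the same bound.
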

\begin{proof}
Firstly, one can easily prove that the function $p_{i}\,(i=1,2\textrm{ or }3)$ satisfies
\begin{eqnarray}
|p_{i}(s_{1})-p_{i}(s_{2})| \leq p_{i}(s_{1}-s_{2}),\ \forall\, s_{1},s_{2}\in\mathbb{R}^{m\times n}.
\label{ineq:triangle inequality}
\end{eqnarray}
Then, by \eqref{ineq:triangle inequality} we have
\begin{eqnarray}
f_{k}(s^{k}) &=& g(s^{k})+p_{1}(s^{k}-r_{z}^{k+1})-p_{1}(s^{k})+p_{2}(f-s^{k}+r_{x}^{k+1}-r_{\widehat{x}}^{k+1})-p_{2}(f-s^{k})\nn\\
&&+p_{3}(s^{k}+r_{y}^{k+1}-r_{\widehat{y}}^{k+1})-p_{3}(s^{k})+\frac{1}{2\widetilde{\sigma}}_{k}\|r_{z}^{k+1}\|^{2}+ f^P_{k}(s^{k}),\nn\\
&\leq& g(s^{k})+p_{1}(r_{z}^{k+1})+p_{2}(r_{x}^{k+1}-r_{\widehat{x}}^{k+1})+p_{3}(r_{y}^{k+1}-r_{\widehat{y}}^{k+1})+\frac{1}{2\widetilde{\sigma}}_{k}\|r_{z}^{k+1}\|^{2}\nn\\
&&+ \textrm{pert}(r^{k+1}).\label{ineq:fk-upper-estimate}
\end{eqnarray}
In addition, since $q_{i}\,(i=1,2,3)$ is a convex function and $s^{k+1}$ is a minimizer of the function $f_{k}$, we have
\begin{eqnarray}
f_{k}(s^{k}) &\geq& f_{k}(s^{k+1})\nn\\
&\geq& p_{1}(s^{k}-r_{z}^{k+1})+\frac{1}{2\widetilde{\sigma}_{k}}\|s^{k+1}-s^{k}\|^{2}-q_{1}(\nabla_{y}s^{k+1})+\nn\\
&& p_{2}(f-s^{k+1}+r_{x}^{k+1}-r_{\widehat{x}}^{k+1})-q_{2}(\nabla_{x}f-\nabla_{x}s^{k+1})+p_{3}(s^{k+1}+r_{y}^{k+1}-r_{\widehat{y}}^{k+1})\nn\\
&& -q_{3}(h(s^{k+1})),\nn\\
&\geq& g(s^{k+1})-p_{1}(r_{z}^{k+1})-p_{2}(r_{x}^{k+1}-r_{\widehat{x}}^{k+1})-p_{3}(r_{y}^{k+1}-r_{\widehat{y}}^{k+1})+\frac{1}{2\widetilde{\sigma}_{k}}\|s^{k+1}-s^{k}\|^{2}.\nn\\
&&\label{ineq:fk-lower-estimate}
\end{eqnarray}
The last inequality in \eqref{ineq:fk-lower-estimate} is according to \eqref{ineq:triangle inequality}. In combination of \eqref{ineq:fk-upper-estimate} and \eqref{ineq:fk-lower-estimate}, we obtain
\begin{eqnarray}
g(s^{k})&\geq& g(s^{k+1})-2p_{1}(r_{z}^{k+1})-2p_{2}(r_{x}^{k+1}-r_{\widehat{x}}^{k+1})-2p_{3}(r_{y}^{k+1}-r_{\widehat{y}}^{k+1})-\frac{1}{2\widetilde{\sigma}}_{k}\|r_{z}^{k+1}\|^{2}\nn\\
&&- \textrm{pert}(r^{k+1})+\frac{1}{2\widetilde{\sigma}_{k}}\|s^{k+1}-s^{k}\|^{2}.\label{ineq:g-lower-estimate}
\end{eqnarray}
According to the condition \eqref{ineq:stopping criterion}, we get the result of this lemma.
\end{proof}

In the following, we present the result about the criterion of being a d-stationary point. But before that, we give a definition of a function. For $\widetilde{s}\in\mathbb{R}^{m\times n}$, $\widetilde{\sigma}>0$,
\begin{eqnarray*}
\widehat{f}(s;\widetilde{s},\widetilde{\sigma}) &:=& p_1(s) + p_2(f-s) + p_3(s)-q_{1}(\nabla_{y}\widetilde{s})-\inprod{\nabla_{y}^{*}\nabla q_{\alpha,\lambda_{1}}(\nabla_{y}\widetilde{s})}{s-\widetilde{s}}-\\
&&q_{\alpha,\lambda_{2}}(\nabla_{x}(f-\widetilde{s}))-\inprod{\nabla_{x}^{*}\nabla q_{\alpha,\lambda_{2}}(\nabla_{x}f-\nabla_{x}\widetilde{s})}{s-\widetilde{s}}\\
&& -q_{\alpha,\lambda_{3}}(h(\widetilde{s}))-\inprod{\nabla q_{\alpha,\lambda_{3}}(h(\widetilde{s}))}{h(s)-h(\widetilde{s})}+\frac{1}{2\widetilde{\sigma}}\|s-\widetilde{s}\|^{2}.
\end{eqnarray*}
Then in order to prove the result, we need a lemma. Since it is very similar to \cite[Lemma 5, Lemma6]{CuiPS}, we present it without proof.
\begin{lemma}\label{lemma-equivalence}
Let $\bar{s}\in \mathbb{R}^{m\times n}$, then $\bar{s}$ is a d-stationary point of \eqref{eq:g-function} if and only if there exist $\widetilde{\sigma}\geq 0$ such that $\bar{s}$ is a solution of the following minimization problem
\begin{eqnarray*}
\min_{s\in\mathbb{R}^{m\times n}}\Big\{\widehat{f}(s;\widetilde{s},\widetilde{\sigma})\Big\}.
\end{eqnarray*}
\end{lemma}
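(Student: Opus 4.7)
The plan is to exploit that $g=P-Q$ is a DC function with $P=p_{1}+p_{2}+p_{3}$ proper closed convex and $Q=q_{1}+q_{2}+q_{3}$ continuously differentiable and convex (each SCAD piece $q_{\alpha,\lambda}$ is convex and $C^{1}$, composed with either a linear difference operator $\nabla_{y},\nabla_{x}$ or the column-norm map $h$, which is convex and on which $q_{\alpha,\lambda_{3}}$ is coordinatewise monotone). Under this structure the directional derivative decomposes as $g'(\bar{s};d)=P'(\bar{s};d)-\inprod{\nabla Q(\bar{s})}{d}$, so $\bar{s}$ is d-stationary if and only if $P'(\bar{s};d)\geq\inprod{\nabla Q(\bar{s})}{d}$ for every direction $d$. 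Using the identity $P'(\bar{s};d)=\sigma_{\partial P(\bar{s})}(d)$ valid for any closed proper convex $P$, this is in turn equivalent to the subdifferential inclusion $\nabla Q(\bar{s})\in\partial P(\bar{s})$. The bulk of the proof consists of matching this inclusion with the first-order optimality condition of $\min_{s}\widehat{f}(s;\bar{s},\widetilde{\sigma})$.

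For this matching I would first compute $\nabla Q(\bar{s})$ by the chain rule, obtaining exactly the linearisation terms that appear in $\widehat{f}(\,\cdot\,;\bar{s},\widetilde{\sigma})$, namely $\nabla_{y}^{*}\nabla q_{\alpha,\lambda_{1}}(\nabla_{y}\bar{s})$, $-\nabla_{x}^{*}\nabla q_{\alpha,\lambda_{2}}(\nabla_{x}f-\nabla_{x}\bar{s})$, and the term coming from $\partial h(\bar{s})$ paired with $\nabla q_{\alpha,\lambda_{3}}(h(\bar{s}))$. Because $\widehat{f}(\,\cdot\,;\bar{s},\widetilde{\sigma})$ differs from $P$ only by an affine term (the linearisation of $Q$ at $\bar{s}$) and the quadratic proximal term, it is convex, and strongly convex for every finite $\widetilde{\sigma}>0$, so first-order optimality is both necessary and sufficient for a global minimiser. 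Writing the optimality condition at $s=\bar{s}$ gives $0\in\partial P(\bar{s})-\nabla Q(\bar{s})+\tfrac{1}{\widetilde{\sigma}}(\bar{s}-\bar{s})=\partial P(\bar{s})-\nabla Q(\bar{s})$, which is exactly the d-stationary inclusion. Both implications then follow: from d-stationarity of $\bar{s}$ I deduce that $\bar{s}$ minimises $\widehat{f}(\,\cdot\,;\bar{s},\widetilde{\sigma})$ for \emph{every} $\widetilde{\sigma}>0$; conversely, from the existence of some $\widetilde{\sigma}$ for which $\bar{s}$ is such a minimiser I recover the inclusion and hence d-stationarity.

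The main obstacle is the clean verification of the chain rules for $\partial P$ and $\nabla Q$ when the inner maps include the linear difference operators $\nabla_{x},\nabla_{y}$ and the nonsmooth convex column-norm map $h$, together with the bookkeeping of the sign conventions induced by the fact that $p_{2}$ and $q_{2}$ are evaluated at the argument $\nabla_{x}f-\nabla_{x}s$ rather than $\nabla_{x}s$. Since all inner operators are either linear or convex with a monotone outer function, standard convex calculus applies without a constraint qualification, and the proof structurally mirrors \cite[Lemma 5, Lemma 6]{CuiPS} referenced by the authors; the only ingredient truly specific to this paper is verifying that each $q_{\alpha,\lambda_{i}}$ is convex and continuously differentiable, which is immediate from the piecewise description and the derivative formula given in Section~\ref{sec:Algorithm}.
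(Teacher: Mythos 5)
Your proposal is correct and follows exactly the route the paper intends: the paper gives no proof of this lemma, deferring to \cite[Lemma 5, Lemma 6]{CuiPS}, and the argument there is precisely your reduction of d-stationarity to the inclusion $\nabla Q(\bar{s})\in\partial P(\bar{s})$ followed by matching this with the first-order optimality condition of the convex, (strongly) convex-regularized surrogate $\widehat{f}(\cdot;\bar{s},\widetilde{\sigma})$ at $s=\bar{s}$, where the quadratic term contributes nothing. The only step worth making explicit when you carry out the chain-rule bookkeeping is that for the composite term the identity $\partial\big(\sum_i(\lambda_3-(\nabla q_{\alpha,\lambda_3}(h(\bar{s})))_i)\,h_i\big)(\bar{s})=\partial p_3(\bar{s})-\nabla(q_{\alpha,\lambda_3}\circ h)(\bar{s})$ also holds at zero columns, because $\nabla q_{\alpha,\lambda_3}$ vanishes on $[0,\lambda_3]$ so both sides reduce to $\lambda_3\mathbb{B}$ there.
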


Now we present the convergence result of the PMM algorithm.
\begin{theorem}\label{thm-convergence}
Assume that the objective function in \eqref{eq:g-function} is bounded below and $\{\widetilde{\sigma}_{k}\}$ are positive convergent sequences. Then every cluster point of the sequence $\{s^{k}\}$ generated by the PMM algorithm is a d-stationary point of \eqref{eq:g-function}.
\end{theorem}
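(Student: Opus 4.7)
The plan is to combine the descent lemma with the stopping criterion to force successive differences and inexactness errors to vanish, then pass to the limit in the subproblem optimality inequality and invoke Lemma~\ref{lemma-equivalence} to identify cluster points as d-stationary.

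First, I would establish $\|s^{k+1}-s^{k}\|\to 0$ and $r^{k+1}\to 0$. Since $g$ is bounded below and Lemma~\ref{lemma-g-descent} gives $g(s^{k})-g(s^{k+1})\geq \frac{1}{4\widetilde{\sigma}_{k}}\|s^{k+1}-s^{k}\|^{2}$ with $\{\widetilde{\sigma}_{k}\}$ bounded (convergent with positive limit $\widetilde{\sigma}$), a telescoping sum yields $\sum_{k}\|s^{k+1}-s^{k}\|^{2}<\infty$. The inexact criterion \eqref{ineq:stopping criterion} then forces $p_{1}(r_{z}^{k+1})$, $p_{2}(r_{x}^{k+1}-r_{\widehat{x}}^{k+1})$, $p_{3}(r_{y}^{k+1}-r_{\widehat{y}}^{k+1})$, $\|r_{z}^{k+1}\|$, and $\textrm{pert}(r^{k+1})$ to all go to zero. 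Using the coercivity of each $p_{i}$ on its relevant argument (the norm-based structure of $\|\nabla_{y}\cdot\|_{1,1}$, $\|\nabla_{x}\cdot\|_{1,1}$, $\|\cdot\|_{2,1}$ together with $f$ fixed) and the explicit pairing structure of $\textrm{pert}$, every component of $r^{k+1}$ tends to zero in norm.

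Next, fix a cluster point $\bar{s}$ and a subsequence $s^{k_{j}}\to\bar{s}$. Because $\|s^{k_{j}+1}-s^{k_{j}}\|\to 0$, also $s^{k_{j}+1}\to\bar{s}$. Since $s^{k+1}$ is the exact minimizer of $f_{k}$, one has
\begin{equation*}
f_{k_{j}}(s^{k_{j}+1})\;\leq\; f_{k_{j}}(s)\qquad \text{for every } s\in\mathbb{R}^{m\times n}.
\end{equation*}
I now pass to the limit in $j$. By continuity of $p_{1},p_{2},p_{3}$, of the SCAD gradient $\nabla q_{\alpha,\lambda}$, and of the linear maps $\nabla_{y},\nabla_{x},h$, combined with $r^{k_{j}+1}\to 0$ and $\widetilde{\sigma}_{k_{j}}\to\widetilde{\sigma}>0$, the right-hand side satisfies $f_{k_{j}}(s)\to\widehat{f}(s;\bar{s},\widetilde{\sigma})$. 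On the left, the proximal penalty $\frac{1}{2\widetilde{\sigma}_{k_{j}}}\|s^{k_{j}+1}-r_{z}^{k_{j}+1}-s^{k_{j}}\|^{2}$ tends to zero because $s^{k_{j}+1}-s^{k_{j}}\to 0$ and $r_{z}^{k_{j}+1}\to 0$, while all other constituents of $f_{k_{j}}(s^{k_{j}+1})$ converge by continuity to those of $\widehat{f}(\bar{s};\bar{s},\widetilde{\sigma})$. Hence $\widehat{f}(\bar{s};\bar{s},\widetilde{\sigma})\leq \widehat{f}(s;\bar{s},\widetilde{\sigma})$ for all $s$, and Lemma~\ref{lemma-equivalence} identifies $\bar{s}$ as a d-stationary point of \eqref{eq:g-function}.

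The main obstacle is the error bookkeeping in the third step: the components of $r^{k+1}$ enter both as arguments shifted inside the nonsmooth $p_{i}$ and as cross inner-product terms $f^{P}_{k}$, and they must be uniformly dominated so that passing to the limit through lower semicontinuity on the left and continuity on the right remains consistent. The dominations supplied by \eqref{ineq:stopping criterion} via the triangle inequality \eqref{ineq:triangle inequality} (exactly as in the proof of Lemma~\ref{lemma-g-descent}) are what make this bookkeeping work, since each $p_{i}(r^{k+1})$ and the perturbation pairing $\textrm{pert}(r^{k+1})$ are controlled by $\frac{1}{4\widetilde{\sigma}_{k}}\|s^{k+1}-s^{k}\|^{2}$, which is summable. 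A minor secondary point is ensuring $\widetilde{\sigma}>0$ in the limit so that Lemma~\ref{lemma-equivalence} applies with a positive proximal parameter; this is guaranteed by the standing assumption that $\{\widetilde{\sigma}_{k}\}$ is positive and convergent.
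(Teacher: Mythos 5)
Your proposal follows essentially the same route as the paper: use Lemma~\ref{lemma-g-descent} plus boundedness below to get $\|s^{k+1}-s^{k}\|\to 0$, use the stopping criterion \eqref{ineq:stopping criterion} to drive the error terms $r^{k+1}$ to zero, pass to the limit along a convergent subsequence in the minimization inequality $f_{k}(s^{k+1})\leq f_{k}(s)$ so that both sides converge to the corresponding values of $\widehat{f}(\cdot;s^{\infty},\widetilde{\sigma}_{\infty})$, and conclude via Lemma~\ref{lemma-equivalence}. The only cosmetic difference is that the paper makes the error bookkeeping explicit through a two-sided sandwich built from the triangle inequality \eqref{ineq:triangle inequality}, whereas you appeal directly to continuity; the substance is identical.
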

\begin{proof}
It is known from Lemma \ref{lemma-g-descent} that $\{g(s^{k})\}$ is a non-increasing sequence. By the assumption that the value of $g(s)$ is bounded below, we obtain that $\{g(s^{k})\}$ converges. Meanwhile, $\lim_{k\rightarrow\infty}\|s^{k+1}-s^{k}\|=0$. Let $\{s^{k}\}_{k\in\mathcal{K}}$ be a subsequence of $\{s^{k}\}$, and $\lim_{k(\in\mathcal{K})\rightarrow\infty}s^{k}=s^{\infty}$. In the following, we prove that $s^{\infty}$ is a d-stationary point of \eqref{eq:g-function}.

Since $s^{k+1}$ is a minimizer of the function $f_{k}$, we have
\begin{eqnarray*}
&& \widehat{f}(s;s^{k},\widetilde{\sigma}_{k})+p_{1}(r_{z}^{k+1})+p_{2}(r_{x}^{k+1}-r_{\widehat{x}}^{k+1})+p_{3}(r_{y}^{k+1}-r_{\widehat{y}}^{k+1})+\frac{1}{2\widetilde{\sigma}_{k}}\|r_{z}^{k+1}\|^{2}\\
&& -\inprod{g_{1}^{k}}{r_{z}^{k+1}}+\inprod{g_{2}^{k}}{r_{x}^{k+1}-r_{\widehat{x}}^{k+1}}-\inprod{|\nabla q_{3}(h(s^{k}))|}{h(r_{y}^{k+1}-r_{\widehat{y}}^{k+1})}\\
&\geq& f_{k}(s)\geq f_{k}(s^{k+1})\\
&\geq& \widehat{f}(s^{k+1};s^{k},\widetilde{\sigma}_{k})-p_{1}(r_{z}^{k+1})-p_{2}(r_{x}^{k+1}-r_{\widehat{x}}^{k+1})-p_{3}(r_{y}^{k+1}-r_{\widehat{y}}^{k+1})-\frac{1}{2\widetilde{\sigma}_{k}}\|r_{z}^{k+1}\|^{2}\\
&& +\inprod{g_{1}^{k}}{r_{z}^{k+1}}-\inprod{g_{2}^{k}}{r_{x}^{k+1}-r_{\widehat{x}}^{k+1}}+\inprod{|\nabla q_{3}(h(s^{k}))|}{h(r_{y}^{k+1}-r_{\widehat{y}}^{k+1})},\ \forall\,s\in \mathbb{R}^{m\times n}.
\end{eqnarray*}
We assume $\lim_{k\rightarrow\infty}\widetilde{\sigma}_{k}=\widetilde{\sigma}_{\infty}$. Note that $\|r^{k+1}\|\rightarrow 0$ due to \eqref{ineq:stopping criterion} and $h$ is a continuous function. Then letting $k(\in\mathcal{K})\rightarrow\infty$, we derive
\begin{eqnarray*}
\widehat{f}(s;s^{\infty},\widetilde{\sigma}_{\infty})
\geq \widehat{f}(s^{\infty};s^{\infty},\widetilde{\sigma}_{\infty}),\ \forall\, s^{\infty}\in\mathbb{R}^{m\times n}.
\end{eqnarray*}

Hence, by Lemma \ref{lemma-equivalence} we can get the desired result of this theorem.
\end{proof}

Finally, we establish the local convergence rate of the sequence $\{s^{k}\}$ in the following. But before that, we present a lemma.
\begin{lemma}
\label{lemma-2norm-upper-Lipschtz}
In $\mathbb{R}^{m}$, define a function $w:\,\mathbb{R}^{m}\rightarrow \mathbb{R}$, $w(x_{v}):=\|x_{v}\|$, then there is a constant $\kappa\geq 0$ such that
\begin{eqnarray}\label{ineq:upper Lipschitz}
\partial w(x_{v})\subseteq \partial w(\bar{x}_{v})+\kappa\|x_{v}-\bar{x}_{v}\| \mathbb{B}_{m},\ \ \forall\,x_{v}\in\mathbb{R}^{m}.
\end{eqnarray}
\begin{proof}
We discuss this problem in three cases: \\
(i). If $x_{v}\neq 0$ and $\bar{x}_{v}\neq0$, then $\partial w(x_{v})=\Big\{\frac{x_{v}}{\|x_{v}\|}\Big\}$, $\partial w(\bar{x}_{v})=\Big\{\frac{\bar{x}_{v}}{\|\bar{x}_{v}\|}\Big\}$, we can see that the Lipschitz continuity holds, which implies that the result \eqref{ineq:upper Lipschitz} holds.\\
(ii). If $x_{v}\neq 0$ and $\bar{x}_{v}=0$, then $\partial w(x_{v})=\Big\{\frac{x_{v}}{\|x_{v}\|}\Big\}$, $\partial w(\bar{x}_{v})=\mathbb{B}_{m}$, the result \eqref{ineq:upper Lipschitz} obviously holds.\\
(iii). If $x_{v}=0$ and $\bar{x}_{v}\neq0$, then $\partial w(x_{v})=\mathbb{B}_{m}$, $\partial w(\bar{x}_{v})=\Big\{\frac{\bar{x}_{v}}{\|\bar{x}_{v}\|}\Big\}$, therefore \eqref{ineq:upper Lipschitz} holds with $\kappa\geq\frac{1}{\|x_{v}-\bar{x}_{v}\|}$.
\end{proof}
\end{lemma}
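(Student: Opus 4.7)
The plan is to compute $\partial w$ explicitly using the standard subdifferential of the Euclidean norm and then verify the inclusion by a three-case analysis according to whether each of $x_v$ and $\bar{x}_v$ is zero. Convex analysis gives $\partial w(x_v) = \{x_v/\|x_v\|\}$ when $x_v \neq 0$, since $w$ is differentiable there, while $\partial w(0) = \mathbb{B}_m$, the closed unit ball, follows from the Fenchel--Young characterization $\partial w(0) = \{s : \langle s,y\rangle \leq \|y\|\ \forall y\in\mathbb{R}^m\}$. These two formulas are the only structural facts I need.

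First I would treat the case in which both $x_v$ and $\bar{x}_v$ are nonzero. Here both subdifferentials are singletons, so the inclusion reduces to the estimate $\|x_v/\|x_v\| - \bar{x}_v/\|\bar{x}_v\|\| \leq \kappa \|x_v - \bar{x}_v\|$. Since the map $x \mapsto x/\|x\|$ is continuously differentiable on $\mathbb{R}^m\setminus\{0\}$ with Jacobian bounded whenever $\|x\|$ is bounded below, a routine mean-value argument yields a local Lipschitz constant. Next, if $\bar{x}_v = 0$, then $\partial w(\bar{x}_v) = \mathbb{B}_m$; the singleton $\partial w(x_v)$ has norm $1$ and so lies inside $\mathbb{B}_m$ already, making the inclusion trivial. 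Finally, if $x_v = 0$ and $\bar{x}_v \neq 0$, one needs $\mathbb{B}_m \subseteq \{\bar{x}_v/\|\bar{x}_v\|\} + \kappa\|x_v - \bar{x}_v\|\mathbb{B}_m$, which holds provided $\kappa\|\bar{x}_v\|\geq 1$.

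The delicate point, and the step I would flag as the main obstacle, is unifying the three cases under a single constant $\kappa$, because the bound produced in the last case blows up as $\bar{x}_v\to 0$. I would resolve this by invoking the localization built into the definition of local upper Lipschitz continuity in Section~\ref{sec:Preliminaries}: when $\bar{x}_v\neq 0$, restrict to a neighborhood of $\bar{x}_v$ disjoint from the origin, so only the first case occurs and its Lipschitz constant is uniform; when $\bar{x}_v = 0$, only the second case arises and the inclusion is free for any $\kappa\geq 0$. The third case then appears only as a consistency check rather than a constraint on a global constant, and this is precisely the interpretation needed for the lemma to feed into the forthcoming convergence-rate argument.
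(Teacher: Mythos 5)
Your proof follows essentially the same route as the paper: the explicit formulas $\partial w(x_{v})=\{x_{v}/\|x_{v}\|\}$ for $x_{v}\neq 0$ and $\partial w(0)=\mathbb{B}_{m}$, followed by the identical three-case analysis. If anything you are more careful than the paper, which merely asserts ``Lipschitz continuity'' in case (i) and lets the constant in case (iii) depend on the points; your observation that $\kappa$ cannot be uniform in $\bar{x}_{v}$ and must be read as a base-point-dependent (local upper Lipschitz) constant is precisely the subtlety the paper's proof glosses over (note only that in case (iii) the correct requirement is $\kappa\|\bar{x}_{v}\|\geq 2$ rather than $\geq 1$, since the farthest point of $\mathbb{B}_{m}$ from a unit vector is at distance $2$ --- a slip your write-up shares with the paper).
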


\begin{theorem}
Assume that the objective function in \eqref{eq:g-function} is bounded below. Let $\{s^{k}\}$ be the sequence generated by the PMM algorithm with $\mathcal{C}^{\infty}$ as the set of all its cluster points. The whole sequence $\{s^{k}\}$ converges to an element of $\mathcal{C}^{\infty}$, if one of the following conditions holds:\\
(I) The set $\mathcal{C}^{\infty}$ contains an isolated element;\\
(II) $\{s^{k}\}$ is a bounded sequence.\\
Furthermore, based on the condition (II), let $\lim\limits_{k\rightarrow\infty}s^{k}=s^{\infty}\in\mathcal{C}^{\infty}$ and the function $g$ satisfies the K{\L} property at $s^{\infty}$ with an exponent $\gamma\in[0,1)$, we have\\
(a) The sequence $\{s^{k}\}$ converges in a finite number of steps, if $\gamma = 0$;\\
(b) The sequence $\{s^{k}\}$ converges R-linearly, i.e., there exist $\mu>0$ and $\theta\in[0,1)$ such that $\|s^{k}-s^{\infty}\|\leq\mu\theta^{^{k}}$, if $0<\gamma\leq\frac{1}{2}$ and $k$ is sufficiently large;\\
(c) The sequence $\{s^{k}\}$ converges R-sublinearly, i.e., there exists $\mu>0$ such that $\|s^{k}-s^{\infty}\|\leq\mu k^{-\frac{1-\gamma}{2\gamma-1}}$, if $\frac{1}{2}<\gamma<1$ and $k$ is sufficiently large.
\end{theorem}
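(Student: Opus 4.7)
The plan is to follow the standard Kurdyka--{\L}ojasiewicz framework for nonconvex DC programming, which rests on three ingredients: (i) a sufficient-decrease inequality, already supplied by Lemma \ref{lemma-g-descent}; (ii) a relative-error bound of the form $\mathrm{dist}(0,\partial g(s^{k+1}))\leq C\|s^{k+1}-s^{k}\|$; and (iii) the K{\L} inequality at the limit point. The two non-standard features that force extra care are the inexactness of the inner dADMM and the presence of the non-smooth composition $h(\cdot)$ inside $p_{3}$, so Lemma \ref{lemma-2norm-upper-Lipschtz} will be essential.

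First I would dispose of the whole-sequence convergence. From Lemma \ref{lemma-g-descent} and the assumed lower boundedness of $g$, the sequence $\{g(s^{k})\}$ converges and $\sum_{k}\|s^{k+1}-s^{k}\|^{2}<\infty$, so $\|s^{k+1}-s^{k}\|\to 0$. Under condition (I), a standard topological argument (Ostrowski-type) shows that if $\mathcal{C}^{\infty}$ contains an isolated point $s^{\star}$, then a small neighbourhood of $s^{\star}$ that excludes the rest of $\mathcal{C}^{\infty}$ must eventually trap the iterates, and the whole sequence converges to $s^{\star}$. Under condition (II), boundedness guarantees $\mathcal{C}^{\infty}\neq\emptyset$, and the KL-based summability argument in Step~3 below upgrades this to convergence of the entire sequence.

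Next I would derive the subgradient bound. The inexact optimality condition for the primal problem \eqref{con:inexact_subproblem_primal} combined with the KKT system \eqref{eq:KKT-subproblem} yields
\begin{align*}
0\in\partial P(s^{k+1})-\nabla Q(s^{k})+\tfrac{1}{\widetilde{\sigma}_{k}}(s^{k+1}-s^{k})+E^{k+1},
\end{align*}
where $E^{k+1}$ collects the residuals $r^{k+1}$. Since $\partial g(s^{k+1})=\partial P(s^{k+1})-\nabla Q(s^{k+1})$, adding and subtracting $\nabla Q(s^{k+1})$ and using the local Lipschitz continuity of $\nabla Q$ (automatic under (II) because $\{s^{k}\}$ is bounded and the SCAD gradient is Lipschitz), together with Lemma \ref{lemma-2norm-upper-Lipschtz} to handle $\partial\|\cdot\|$ inside $\partial p_{3}$, produces
\begin{align*}
\mathrm{dist}(0,\partial g(s^{k+1}))\leq C_{1}\|s^{k+1}-s^{k}\|+C_{2}\|r^{k+1}\|.
\end{align*}
Crucially, the stopping criterion \eqref{ineq:stopping criterion} bounds $\|r^{k+1}\|$ (up to norm equivalences on its components) by a constant multiple of $\|s^{k+1}-s^{k}\|$, so the residual is absorbed and one ends up with $\mathrm{dist}(0,\partial g(s^{k+1}))\leq C\|s^{k+1}-s^{k}\|$. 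This is the chief technical obstacle: making the absorption rigorous requires an explicit accounting of each $r_{\bullet}^{k+1}$ and the use of $\nabla q^{\rm scad}$ being globally bounded.

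With sufficient decrease and the subgradient bound in hand, the K{\L} inequality applied at a cluster point $s^{\infty}$ with desingularising function $\varphi$ yields, by the standard concavity telescoping argument, $\sum_{k}\|s^{k+1}-s^{k}\|<\infty$, which proves (II)-convergence to $s^{\infty}$. For the rates, specialising $\varphi(t)=c\,t^{1-\gamma}/(1-\gamma)$ turns the K{\L} inequality into $(g(s^{k+1})-g(s^{\infty}))^{2\gamma}\leq C\|s^{k+1}-s^{k}\|^{2}$, which together with Lemma \ref{lemma-g-descent} yields a recursion on $\Delta_{k}:=g(s^{k})-g(s^{\infty})$. Case $\gamma=0$ forces $\Delta_{k+1}=0$ after finitely many steps, giving (a). For $\gamma\in(0,\tfrac{1}{2}]$ one obtains $\Delta_{k+1}\leq\rho\Delta_{k}$ with $\rho\in[0,1)$, hence $\sum_{j\geq k}\|s^{j+1}-s^{j}\|$ decays geometrically and $\|s^{k}-s^{\infty}\|\leq\mu\theta^{k}$, yielding (b). For $\gamma\in(\tfrac{1}{2},1)$, the recursion on $\Delta_{k}$ is algebraic; a standard lemma (Attouch--Bolte) shows $\Delta_{k}^{-(2\gamma-1)/(1-\gamma)}$ grows at least linearly in $k$, which after tail-summation produces $\|s^{k}-s^{\infty}\|\leq\mu\,k^{-(1-\gamma)/(2\gamma-1)}$, giving (c).
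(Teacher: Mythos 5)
Your proposal follows essentially the same route as the paper: sufficient decrease from Lemma \ref{lemma-g-descent}, a relative-error subgradient bound in which the inexactness residuals are absorbed via the stopping criterion \eqref{ineq:stopping criterion} and Lemma \ref{lemma-2norm-upper-Lipschtz}, and then the Attouch--Bolte--Svaiter K{\L} machinery for whole-sequence convergence and the three rate regimes, with condition (I) handled by the isolated-cluster-point argument. The only ingredient you leave implicit is the mechanism for moving subgradients from the perturbed points $s^{k+1}-r_{z}^{k+1}$, $f-s^{k+1}+r_{x}^{k+1}-r_{\widehat{x}}^{k+1}$, etc., back to $s^{k+1}$; the paper does this by invoking the locally upper Lipschitz continuity of the piecewise polyhedral multifunctions $\partial p_{1}$ and $\partial p_{2}$ (Robinson) together with Lemma \ref{lemma-2norm-upper-Lipschtz} for $\partial p_{3}$, which is exactly the ``explicit accounting'' you flag as the chief obstacle.
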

\begin{proof}
Since the objective function in \eqref{eq:g-function} is bounded below, it follows from the proof of Theorem \ref{thm-convergence} that $\lim\limits_{k\rightarrow\infty}\|s^{k+1}-s^{k}\|=0$. Under condition (I), the sequence converges to the isolated element of $\mathcal{C}^{\infty}$ due to \cite[Proposition 8.3.10]{Facchinei2003}. To prove the sequential convergence of $\{s^{k}\}$ under the condition (II), it suffices to prove the following three properties of $\{s^{k}\}$ and then apply the convergence results in \cite[Theorem 2.9]{AttouchBS} to the function $g$:\\
(i) The sequence $\{g(s^{k})\}$ is descent, i.e., $g(s^{k+1})\geq g(s^{k})+\frac{c}{2}\|s^{k+1}-s^{k}\|^{2},\,\forall\,k\geq 1$, where $c>0$ is a constant;\\
(ii) There exists a subsequence $\{s^{k}\}_{k\in\mathcal{K}}$ of $\{s^{k}\}$ such that $\lim\limits_{k(\in\mathcal{K})\rightarrow\infty}s^{k}=s^{\infty}$ and $\lim\limits_{k(\in\mathcal{K})\rightarrow\infty}g(s^{k})=g(s^{\infty})$;\\
(iii) There exist a constant $K>0$ and $\varepsilon^{k+1}\in\partial g(s^{k+1})$ such that $\|\varepsilon^{k+1}\|\leq K\|s^{k+1}-s^{k}\|$, for $k$ sufficiently large.\\
Obviously, the first two properties have been proven. Now we focus on the proof of the third property. Let
\begin{eqnarray*}
\bar{\varepsilon}^{k+1}:=g_{1}^{k}-g_{1}^{k+1}-g_{2}^{k}+g_{2}^{k+1}-\frac{1}{\widetilde{\sigma}_{k}}(s^{k+1}-s^{k}-r_{z}^{k+1})-r_{1}^{k+1}-r_{z}^{k+1}.
\end{eqnarray*}
Then $\bar{\varepsilon}^{k+1}\in\partial g_{k}(s^{k+1})$, where
\begin{eqnarray*}
g_{k}(s)&=&p_{1}(s-r_{z}^{k+1})+p_{2}(f-s+r_{x}^{k+1}-r_{\hat{x}}^{k+1})+p_{3}(s+r_{y}^{k+1}-r_{\widehat{y}}^{k+1})\\
&&-q_{\alpha,\lambda_{1}}(\nabla_{y}s)-q_{\alpha,\lambda_{2}}(\nabla_{x}f-\nabla_{x}s)-q_{\alpha,\lambda_{3}}(h(s+r_{y}^{k+1}-r_{\widehat{y}}^{k+1})).
\end{eqnarray*}
According to the special structure of the function $h$, we may consider every element $h_{i}$ separately. Since the functions $\partial p_{1}$ and $\partial p_{2}$ are piecewise polyhedral, $\|r_{z}^{k+1}\|$ and $\|r_{x}^{k+1}-r_{\widehat{x}}^{k+1}\|$ are sufficiently small for all $k$ sufficiently large, we can deduce from \cite{Robinson1981} that $\partial p_{1}$ and $\partial p_{2}$ are locally upper Lipschitz continuous. Hence, it is known from Lemma \ref{lemma-2norm-upper-Lipschtz} and \cite[Theorem 23.8]{Rockafellar70} that there exist $\kappa_{1},\kappa_{2},\kappa_{3}\geq0$, such that
\begin{eqnarray*}
\bar{\varepsilon}^{k+1}&\in&\partial p_{1}(s^{k+1}-r_{z}^{k+1})+\partial p_{2}(f-s^{k+1}+r_{x}^{k+1}-r_{\widehat{x}}^{k+1})+\\
&&\partial p_{3}(s^{k+1}+r_{y}^{k+1}-r_{\widehat{y}}^{k+1})-\partial q_{\alpha,\lambda_{3}}(h(s^{k+1}+r_{y}^{k+1}-r_{\widehat{y}}^{k+1}))-g_{1}^{k+1}+g_{2}^{k+1}\\
&\subset& \partial p_{1}(s^{k+1})+\kappa_{1}\|r_{z}^{k+1}\|\mathbb{B}_{m\times n}+\partial p_{2}(f-s^{k+1})+\kappa_{2}\|r_{x}^{k+1}-r_{\hat{x}}^{k+1}\|\mathbb{B}_{m\times n}\\
&& +\partial p_{3}(s^{k+1})-\partial q_{\alpha,\lambda_{3}}(h(s^{k+1}))+\kappa_{3}\|r_{y}^{k+1}-r_{\widehat{y}}^{k+1}\|\mathbb{B}_{m\times n}-g_{1}^{k+1}+g_{2}^{k+1}\\
&=& \partial p_{1}(s^{k+1})+\partial p_{2}(f-s^{k+1})+\partial p_{3}(s^{k+1})-\partial q_{\alpha,\lambda_{3}}(h(s^{k+1}))+(\kappa_{1}\|r_{z}^{k+1}\|\\
&&+\kappa_{2}\|r_{x}^{k+1}-r_{\widehat{x}}^{k+1}\|+\kappa_{3}\|r_{y}^{k+1}-r_{\widehat{y}}^{k+1}\|)\mathbb{B}_{m\times n}-g_{1}^{k+1}+g_{2}^{k+1}.
\end{eqnarray*}
Hence, we can find $d\bar{\varepsilon}^{k+1}\in\mathbb{R}^{m\times n}$ with $\|d\bar{\varepsilon}^{k+1}\|\leq \kappa_{1}\|r_{z}^{k+1}\|+\kappa_{2}\|r_{x}^{k+1}-r_{\widehat{x}}^{k+1}\|+\kappa_{3}\|r_{y}^{k+1}-r_{\widehat{y}}^{k+1}\|$, such that $\varepsilon^{k+1}:=\bar{\varepsilon}^{k+1}+d\bar{\varepsilon}^{k+1}\in\partial g(s^{k+1})$.
From \eqref{ineq:stopping criterion}, we know that there exists a constant $\kappa_{4}>0$, such that $\|d\bar{\varepsilon}^{k+1}\|\leq\kappa_{4}\|s^{k+1}-s^{k}\|$. In combination of \eqref{ineq:stopping criterion} and the Lipschitz continuity of $\nabla_{y}^{*}\nabla q_{\alpha,\lambda_{1}}$ and $\nabla_{x}^{*}\nabla q_{\alpha,\lambda_{2}}$, we can find some $\kappa_{5}>0$, such that $\|\bar{\varepsilon}^{k+1}\|\leq\kappa_{5}\|s^{k+1}-s^{k}\|$. Therefore, we can get the result of (iii). Finally, since we have proved the properties (i)-(iii), the results for the convergence rate can be established similar to \cite[Theroem 2]{AttouchB} or \cite[Proposition 4]{bolte-pauwels2016}.
\end{proof}

\section{Numerical Experiments}
\label{sec:Numerical experiments}
In this section, we implement some numerical experiments to demonstrate the efficiency of our algorithm. All the experiments are conducted on a Desktop with Intel(R) Core(TM) i5-8250U, CPU@1.60GHz of 8G memory running 64 bit Windows operation system.
All the codes are written in {\sc Matlab} (R2018a) with some subroutines written in C.

In order to give an overall evaluation, we select the full-reference evaluation indices: the peak signal-to-noise ratio (PSNR) and the structural similarity (SSIM) \cite{wang2004image}, which are defined as follows.

\bdefi[Peak Signal-to-noise Ratio (PSNR)]
Given a clean image $u$ and a degraded image $v$ of $m \times n$, the PSNR is defined as
$$\textrm{PSNR}:=10 \log_{10}(\frac{u_{\max}^2}{\textrm{MSE}}),$$
where MSE denotes the mean square error and is defined as
$$\textrm{MSE}:=\frac{1}{mn}\sum_{i=1}^m \sum_{j=1}^n (u_{ij}-v_{ij})^2.$$
\edefi

\bdefi[Structual Similarity (SSIM) \cite{wang2004image}]
The mean values of the images $u$ and $v$ are denoted as
\begin{eqnarray*}
\mu_u &=\frac{1}{mn} \sum_{i=1}^m \sum_{j=1}^n u_{ij}
\end{eqnarray*}
and
\begin{eqnarray*}
\mu_v &=\frac{1}{mn} \sum_{i=1}^m \sum_{j=1}^n v_{ij},
\end{eqnarray*}
respectively. The standard deviations are denoted as
\begin{eqnarray*}
\sigma_u &= \Big( \frac{1}{mn-1} \sum_{i=1}^m \sum_{j=1}^n (u_{ij}-\mu_u)^2 \Big)^{\frac{1}{2}}
\end{eqnarray*}
and
\begin{eqnarray*}
\sigma_v &= \Big( \frac{1}{mn-1} \sum_{i=1}^m \sum_{j=1}^n (v_{ij}-\mu_v)^2 \Big)^{\frac{1}{2}},
\end{eqnarray*}
respectively. The covariance of the images $u$ and $v$ is denoted as
\begin{eqnarray*}
\sigma_{uv} &= \frac{1}{mn-1} \sum_{i=1}^m \sum_{j=1}^n (u_{ij}-\mu_u)(v_{ij}-\mu_v).
\end{eqnarray*}
And the factors of the luminance, contrast, and structure of the images $u$ and $v$ are defined as
$$l(u,v):=\frac{2 \mu_u \mu_v +c_1}{\mu_u^2 + \mu_v^2 +c_1},\;
c(u,v):=\frac{2\sigma_u \sigma_v+c_{2}}{\sigma_u^2 + \sigma_v^2 + c_2},\;
s(u,v):=\frac{\sigma_{uv}+c_3}{\sigma_u \sigma_v + c_3},$$
where $c_1=(k_1 L_u)^2,c_2=(k_2 L_u)^2,c_3=c_2/2$ are three constants. $k_1=0.01$ and $k_2=0.03$ are the common default values.
Then we can define
$$\textrm{SSIM}:=[l(u,v)]^{\alpha} [c(u,v)]^{\beta} [s(u,v)]^{\gamma}.$$

In order to simplify the expression, we set $\alpha,\beta,\gamma=1$ in this paper, then we have
$$\textrm{SSIM}=\frac{(2\mu_u \mu_v+c_1)(2\sigma_{uv}+c_2)}
{(\mu_u^2+\mu_v^2+c_1)(\sigma_u^2 + \sigma_v^2 + c_2)}.$$
\edefi

For the nonconvex model,
the iteration process of the outer problem is terminated if
$$\max\{R_p^{k},R_d^{k}\} < \texttt{Tol}=2 \times 10^{-4}$$
is satisfied, where
\begin{align*}
&R_p^{k} = \frac{\| f-s^{k}-u^{k} \| + \| s^{k}-v^{k} \|}{1+\| f \|},\\
&R_d^{k} = \frac{\| -x^{k}-\widehat x^{k} \| + \| -x^{k}+y^{k}-z^{k} \| + \| -y^{k}-\widehat y^{k} \|}{1+\| f \|},
\end{align*}
or the iteration number reaches $5$. The iteration process of the subproblem is terminated if
\begin{eqnarray*}
2p_{1}(r_{z}^{k+1})+2p_{2}(r_{x}^{k+1}-r_{\widehat{x}}^{k+1})+2p_{3}(r_{y}^{k+1}-r_{\widehat{y}}^{k+1})\nn\\
+\frac{1}{2\widetilde{\sigma}_{k}}\|r_{z}^{k+1}\|^{2} +\textrm{pert}(r^{k+1})\leq\frac{1}{4\widetilde{\sigma}_{k}}\|s^{k+1}-s^{k}\|^{2},
\end{eqnarray*}
or the iteration number reaches $100$.

For the convex model, the iteration process is terminated if
$$\max\{R_p^k,R_d^k,R_c^k\} < \texttt{Tol}=2 \times 10^{-4}$$
is satisfied, where
\begin{align*}
&R_p^k = \frac{\| f-s^{k}-u^{k} \| + \| s^{k}-v^{k} \|}{1+\| f \|},\\
&R_d^k = \frac{\| -x^{k}-\widehat x^{k} \| + \| -x^{k}+y^{k}-z^{k} \| + \| -y^{k}-\widehat y^{k} \|}{1+\| f \|},\\
&R_c^k = \frac{\| s^{k}-{\rm Prox}_{\widehat p_1}(s^{k}+z^{k}) \| + \| u^{k}-{\rm Prox}_{p_2}(u^{k}+\widehat x^{k}) \| + \| v-{\rm Prox}_{\widehat p_3}(v^{k}+\widehat y^{k}) \|}{1+\| f \|},
\end{align*}
or the iteration number reaches $500$.

Three real remote sensing stripe images with different stripe noise distributions are downloaded from the website https://ladsweb.nascom.nasa.gov/. Although there are three parameters in the model \eqref{target_function} and \eqref{target_fun}, we only need to do the cross validation for two parameters by dividing one parameter.

In Table 1, we present the experimental results of different algorithms on the convex model and the nonconvex model, respectively. The image is degraded by nonperiodical and periodical stripes, respectively. From the table, we can see that the nonconvex model performs better than the convex counterpart, especially for the PSNR. In addition, for the convex model, the dADMM performs better than the pADMM. This also demonstrates that we adopt the dADMM to solve the subproblems of the PMM is reasonable.

Furthermore, we also present the stripe and destripe images in Figures 1-3 to illustrate the effectiveness of the model. We can see that the proposed model can remove the stripes, and preserve the stripe-free information and the image details very well.
\begin{table}
\centering
\caption{Experimental results}
\begin{tabular}{*{6}{c|}c}
\hline
\multirow{2}*{Stripe noise} &\multirow{2}*{Case} &\multirow{2}*{Index} &\multirow{2}*{Degraded} &\multicolumn{2}{c|}{Convex model} &\multirow{2}*{Nonconvex model} \\
\cline{5-6}
&&&&pADMM &dADMM &\\
\hline
\multirow{15}*{Nonperiodical} &\multirow{5}*{Case 1}
&KKT    &&1.99e-4&1.99e-4 &1.99e-4 \\
&&PSNR  &23.05 &55.47&59.13 &63.36 \\
&&SSIM  &0.6486 &0.9995&0.9998 &0.9999 \\
&&time  &       & 26.89      &21.48 &39.24\\
&&iter &&478&233&434\\
\cline{2-7}
                      &\multirow{5}*{Case 2}
&KKT    &&1.99e-4&1.98e-4 &1.99e-4 \\
&&PSNR &18.27 &53.92&58.78 &62.43\\
&&SSIM &0.2753 &0.9995&0.9997 &0.9998\\
&&time &&24.38&17.56&33.21\\
&&iter &&401&218&382\\
\cline{2-7}
                      &\multirow{5}*{Case 3}
&KKT    &&1.99e-4&1.99e-4 &2.00e-4 \\
&&PSNR &24.33 &43.67&48.82 &55.38\\
&&SSIM &0.3626 &0.9943&0.9962 &0.9976 \\
&&time &&18.46&13.39&30.25\\
&&iter &&378&179&352\\
\hline
\multirow{15}*{Periodical} &\multirow{5}*{Case 1}
&KKT    &&1.99e-4&2.00e-4 &2.00e-4\\
&&PSNR &20.68 &52.00&57.13 &62.00\\
&&SSIM &0.5868 &0.9987&0.9995 &0.9999\\
&&time &&37.64&29.12&55.83\\
&&iter &&492&281&500\\
\cline{2-7}
                      &\multirow{5}*{Case 2}
&KKT    &&1.99e-4&1.99e-4 &1.79e-4 \\
&&PSNR &17.67 &40.75&46.69 &54.42\\
&&SSIM &0.4577 &0.9946&0.9960 &0.9992\\
&&time &&16.57&11.28&28.26\\
&&iter &&455&180&440\\
\cline{2-7}
                      &\multirow{5}*{Case 3}
&KKT    &&1.98e-4&1.99e-4 &1.96e-4 \\
&&PSNR &18.32 &41.05&48.36 &55.98\\
&&SSIM &0.3283&0.9949 &0.9970 &0.9995\\
&&time &&14.25&9.98&25.76\\
&&iter &&423&167&409\\
\hline
\end{tabular}
\end{table}

\newpage
\begin{figure}[!ht]
  \centering 
  \vspace{-0.35 cm} 
  \subfigtopskip=2pt 
  \subfigbottomskip=2pt 
  \subfigcapskip=-5pt 
  \graphicspath{{picture/}}
  \subfigure[]{
    \label{pa}
    \includegraphics[width=0.25\linewidth]{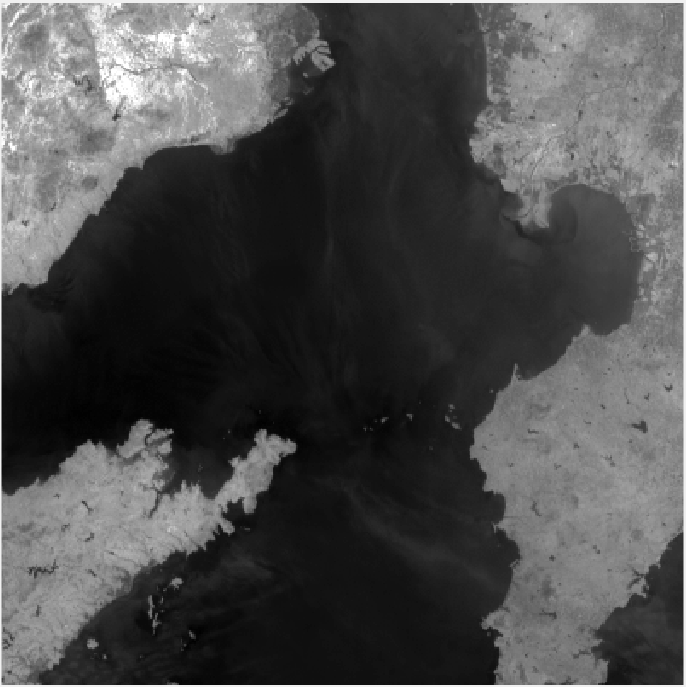}}
  \quad 
  \subfigure[]{
    \label{pb}
    \includegraphics[width=0.25\linewidth]{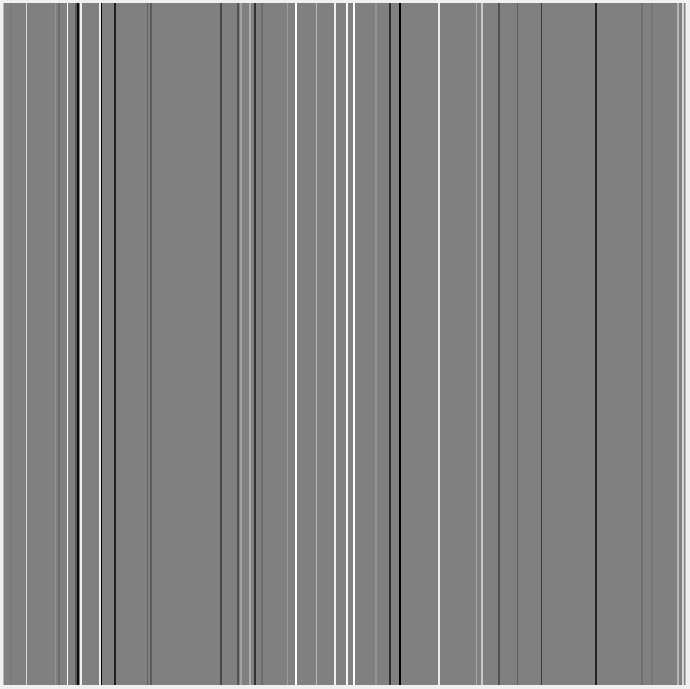}}
    \quad
  \subfigure[]{
	\label{pc}
	\includegraphics[width=0.25\linewidth]{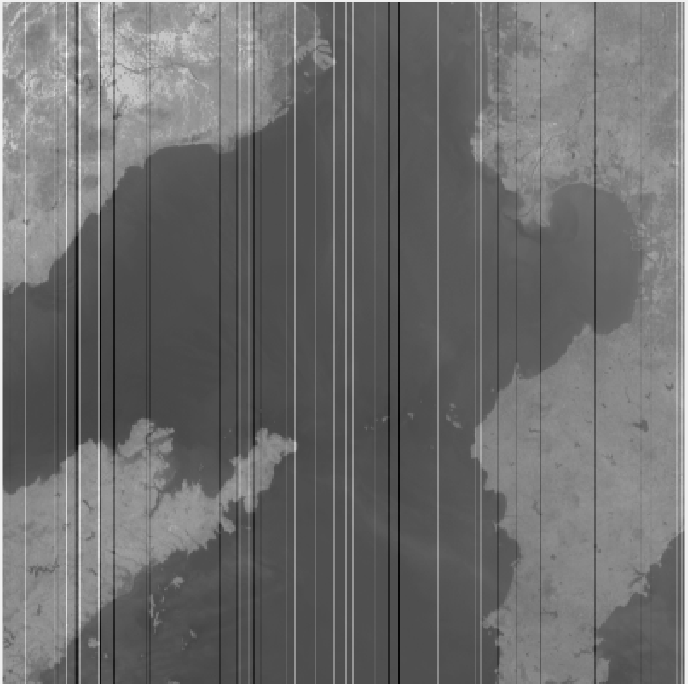}}\\
  \subfigure[]{
	\label{pd}
	\includegraphics[width=0.25\linewidth]{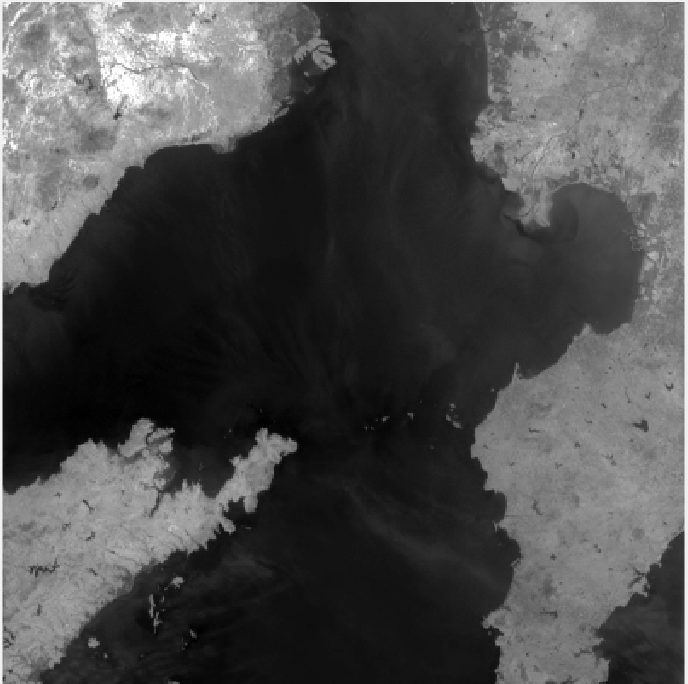}}
  \quad
  \subfigure[]{
    \label{pe}
    \includegraphics[width=0.25\linewidth]{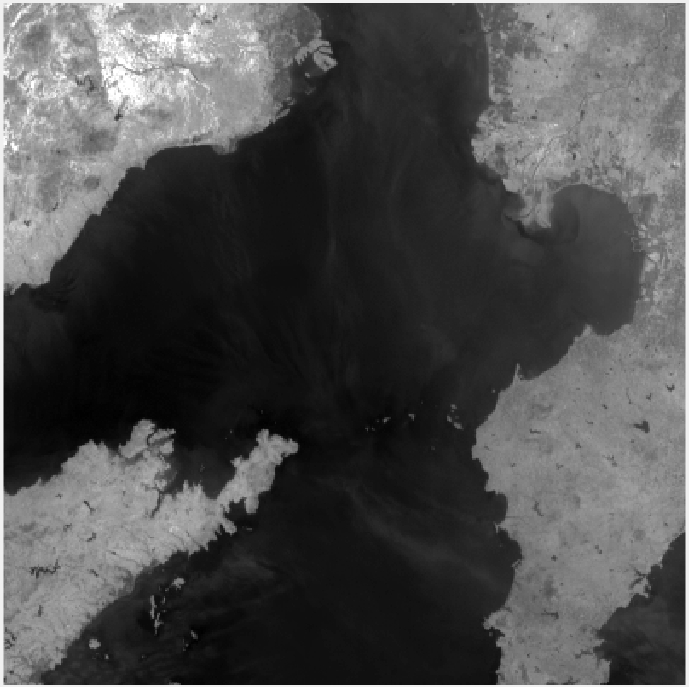}}
  \quad 
  \subfigure[]{
    \label{pf}
    \includegraphics[width=0.25\linewidth]{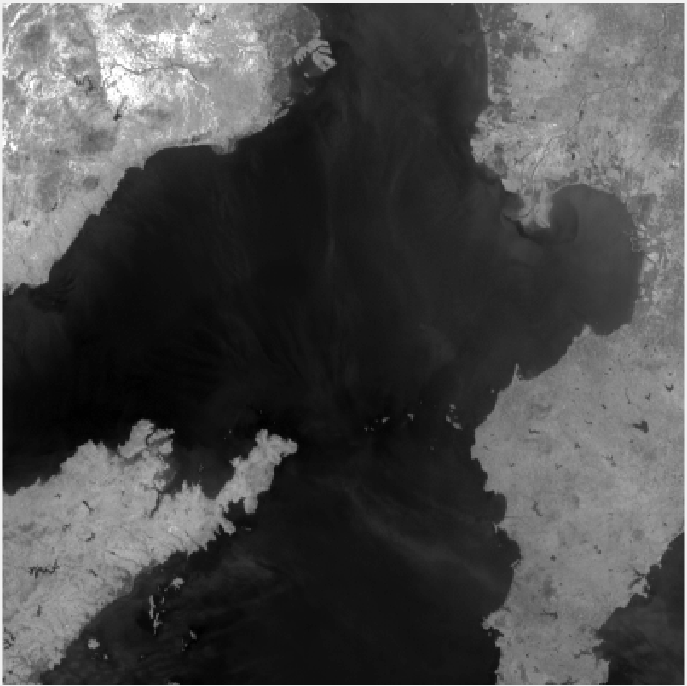}}
  \caption{The destripe results. (a) Original image; (b) stripe component; (c) degraded image; (d) estimated image by the convex model (pADMM); (e) estimated image by the convex model (dADMM); (f) estimated image by the nonconvex model}
  \label{pic_one}
\end{figure}

\newpage
\begin{figure}[!ht]
  \centering 
  \vspace{-0.35 cm} 
  \subfigtopskip=2pt 
  \subfigbottomskip=2pt 
  \subfigcapskip=-5pt 
  \graphicspath{{picture/}}
  \subfigure[]{
    \label{pa}
    \includegraphics[width=0.25\linewidth]{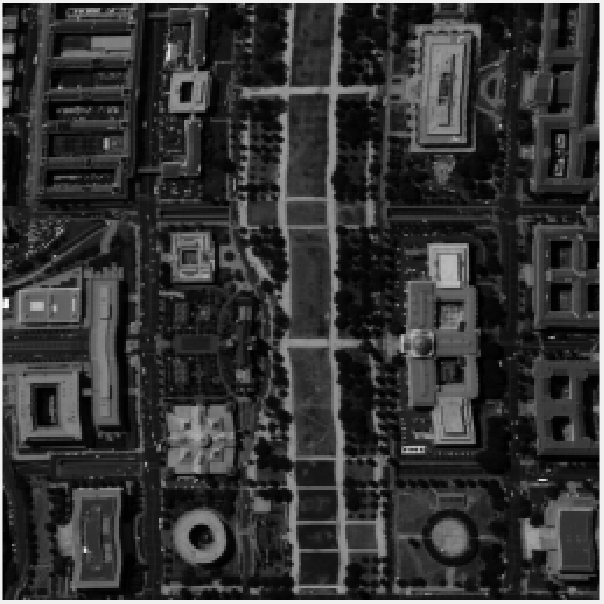}}
  \quad 
  \subfigure[]{
    \label{pb}
    \includegraphics[width=0.25\linewidth]{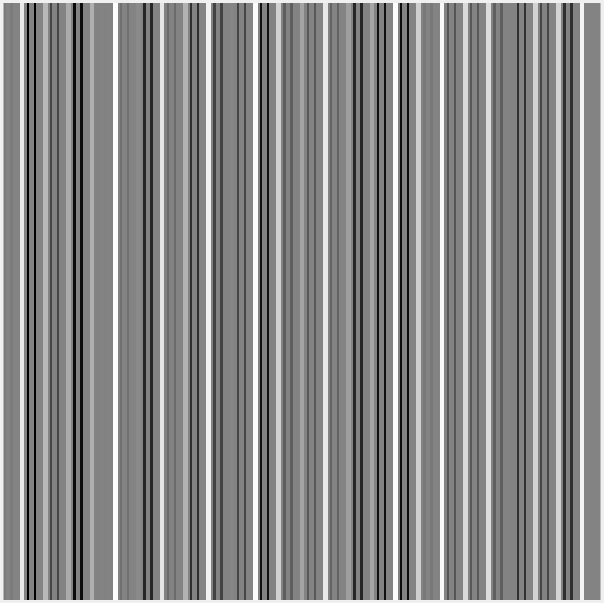}}
    \quad
  \subfigure[]{
	\label{pc}
	\includegraphics[width=0.25\linewidth]{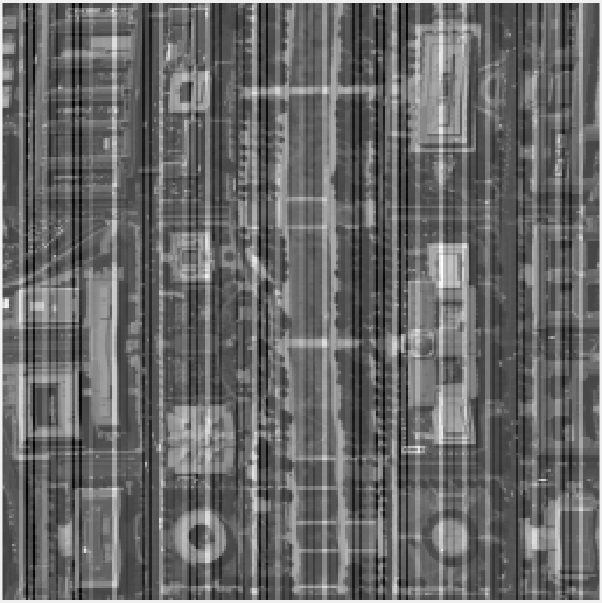}}\\
  \subfigure[]{
	\label{pd}
	\includegraphics[width=0.25\linewidth]{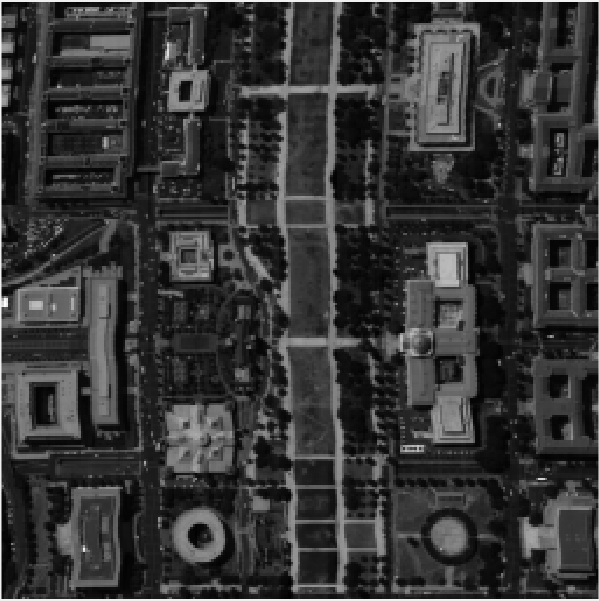}}
  \quad
  \subfigure[]{
    \label{pe}
    \includegraphics[width=0.25\linewidth]{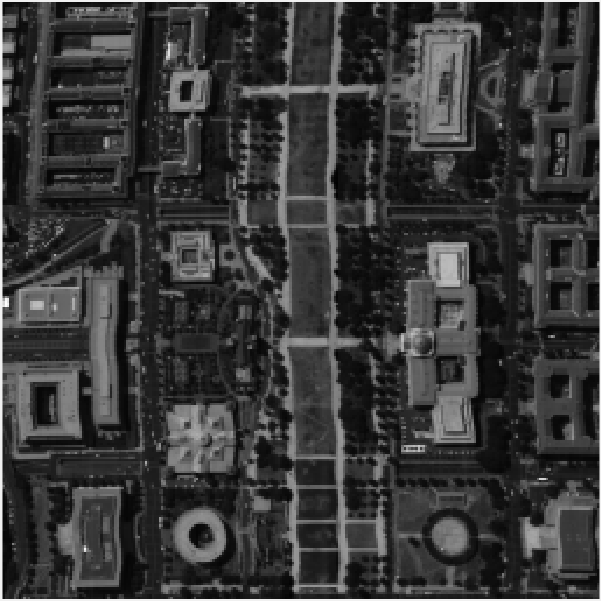}}
  \quad 
  \subfigure[]{
    \label{pf}
    \includegraphics[width=0.25\linewidth]{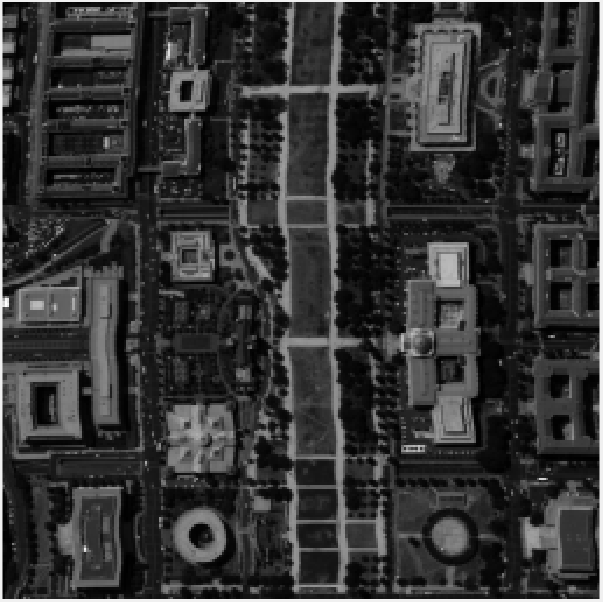}}
  \caption{The destripe results. (a) Original image; (b) stripe component; (c) degraded image; (d) estimated image by the convex model (pADMM); (e) estimated image by the convex model (dADMM); (f) estimated image by the nonconvex model}
  \label{pic_two}
\end{figure}

\begin{figure}[!ht]
  \centering 
  \vspace{-0.35 cm} 
  \subfigtopskip=2pt 
  \subfigbottomskip=2pt 
  \subfigcapskip=-5pt 
  \graphicspath{{picture/}}
  \subfigure[]{
    \label{pa}
    \includegraphics[width=0.25\linewidth]{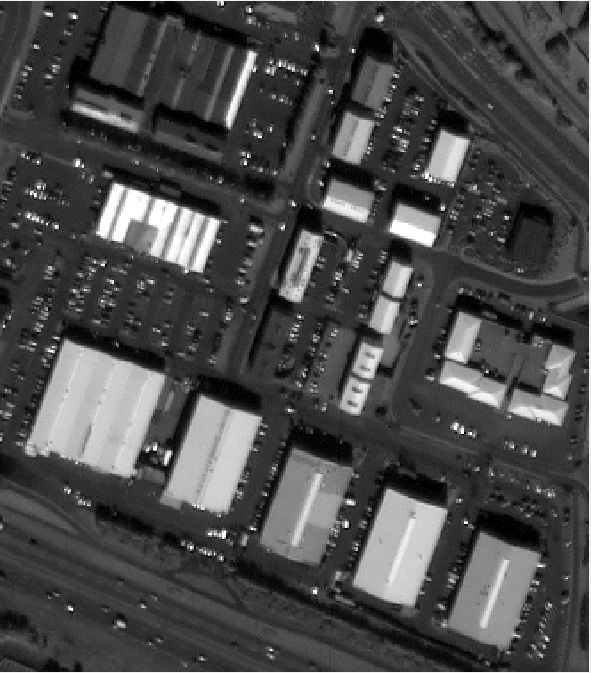}}
  \quad 
  \subfigure[]{
    \label{pb}
    \includegraphics[width=0.25\linewidth]{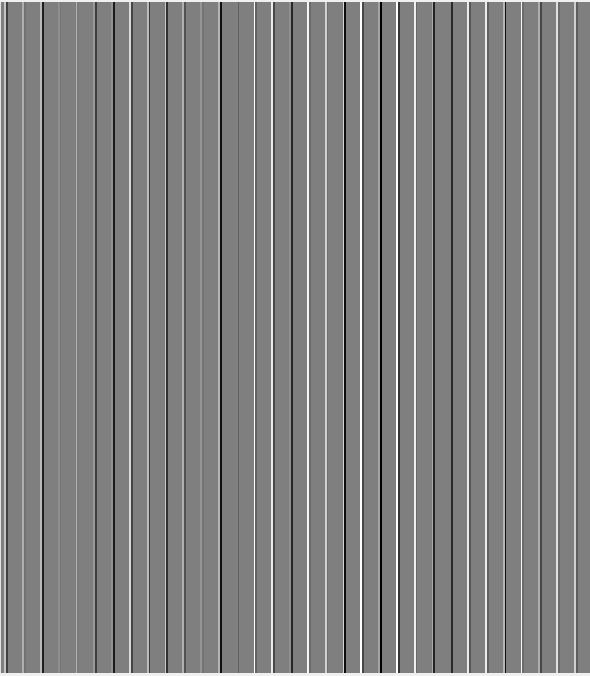}}
    \quad
  \subfigure[]{
	\label{pc}
	\includegraphics[width=0.25\linewidth]{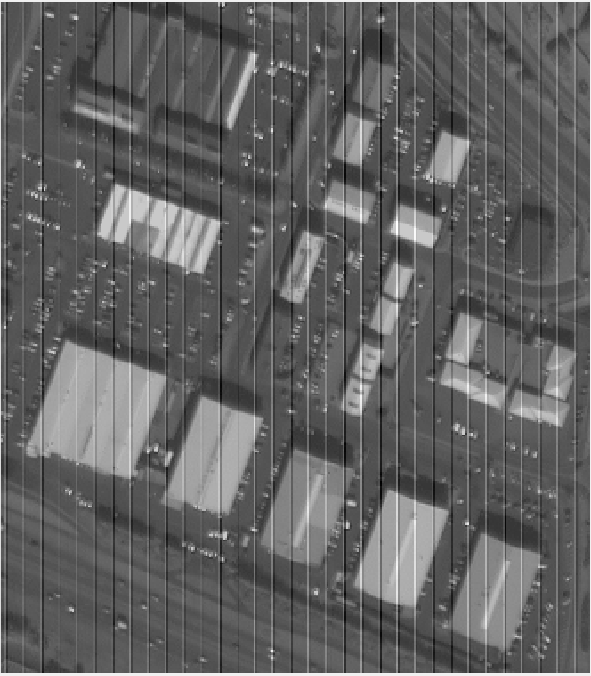}}\\
  \subfigure[]{
	\label{pd}
	\includegraphics[width=0.25\linewidth]{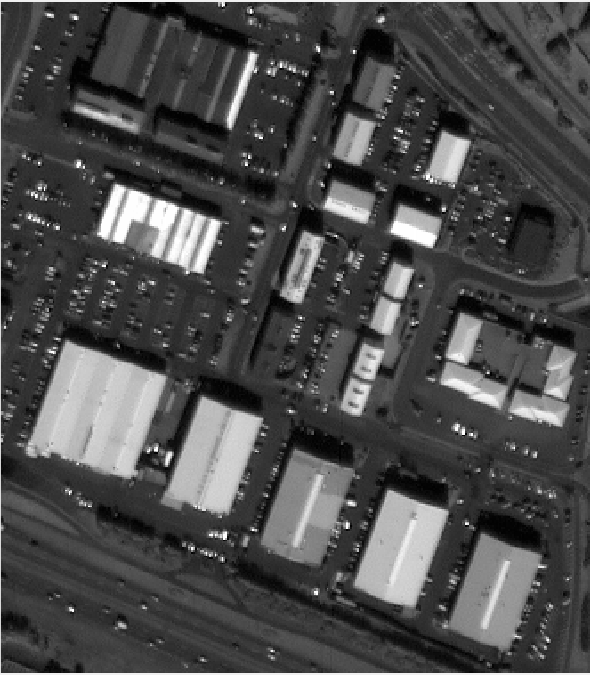}}
  \quad
  \subfigure[]{
    \label{pe}
    \includegraphics[width=0.25\linewidth]{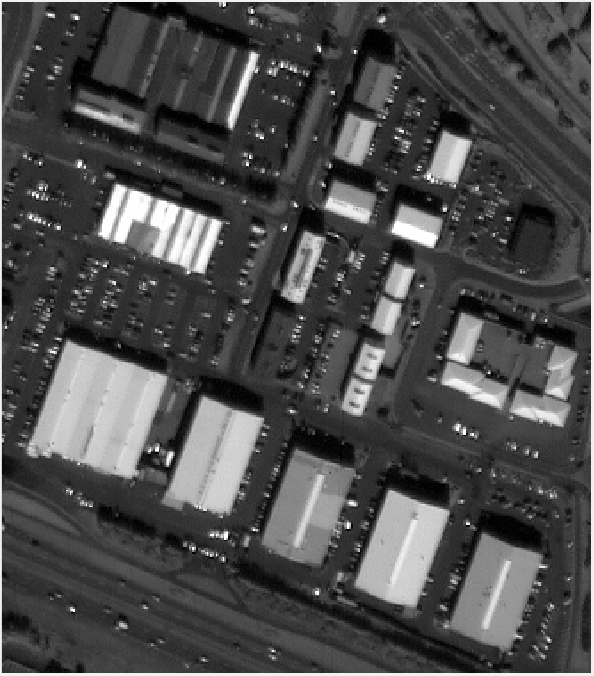}}
  \quad 
  \subfigure[]{
    \label{pf}
    \includegraphics[width=0.25\linewidth]{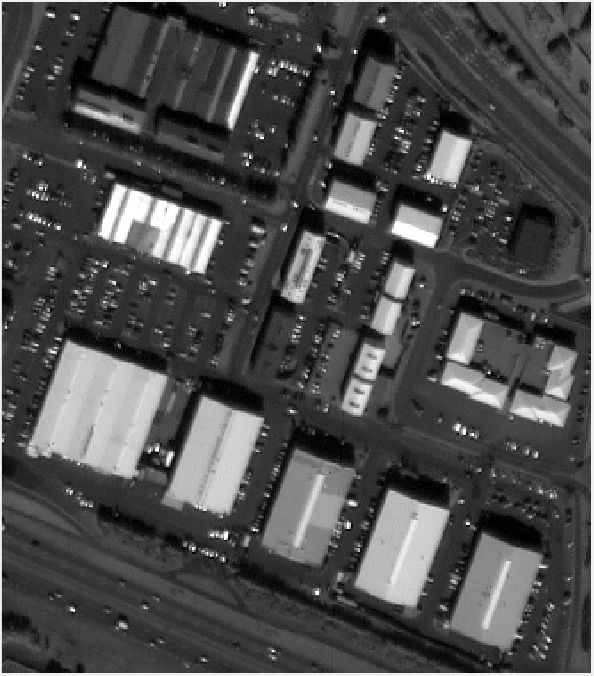}}
  \caption{The destripe results. (a) Original image; (b) stripe component; (c) degraded image; (d) estimated image by the convex model (pADMM); (e) estimated image by the convex model (dADMM); (f) estimated image by the nonconvex model}
  \label{pic_three}
\end{figure}

\section{Conclusion}
\label{sec:Conclusion}

In this paper, we have developed a nonconvex model for remote sensing stripe images. For the computational issues, we have applied the inexact PMM algorithm to solve the model with the inner subproblem solved by the dADMM. Meanwhile, we have analyzed the convergence of the algorithm based on the K{\L} property. Finally, numerical experiments demonstrate the superiority of the proposed model and algorithm.

\bibliographystyle{spmpsci}
\bibliography{DCforRemoteSensing}

\end{document}